\let\email=\url
\DeclareOldFontCommand{\bf}{\normalfont\bfseries}{\mathbf}
\def\n{\par\noindent}
\newtheoremstyle{meiner} 
    {4pt}{3pt}           
    {\itshape}           
    {}                   
    {\sffamily\bfseries} 
    {}                   
    { }                  
    {}                   
\theoremstyle{meiner}
  \newtheorem{theorem}{Theorem}
  \crefname{theorem}{Theorem}{Theorems}
  \Crefname{theorem}{Theorem}{Theorems}
  \newtheorem{proposition}{Proposition}
  \newtheorem{lemma}{Lemma}
  \newtheorem{corollary}{Corollary}
  \crefname{claim}{Claim}{Claims}
  \Crefname{claim}{Claim}{Claims}
  \newtheorem{conjecture}{Conjecture}
  \newtheorem{observation}{Observation}
  \newtheorem{remark}{Remark}
  \crefname{remark}{Remark}{Remarks}
  \Crefname{remark}{Remark}{Remarks}
  \crefname{question}{Question}{Questions}
  \Crefname{question}{Question}{Questions}
  \crefname{observation}{Observation}{Observations}
  \Crefname{observation}{Observation}{Observations}
  \newtheorem{example}{Example}
  \crefname{example}{Example}{Examples}
  \Crefname{example}{Example}{Examples}
\theoremstyle{definition}
  \newtheorem{definition}{Definition}
\def\PsFig#1#2#3{}
\def\SC{\scshape}
\title{The Star Dichromatic Number%
}
\author{
\parbox{7cm}{\center
{\SC Winfried Hochst\"attler
\\[3pt]
\normalsize
\email{winfried.hochstaettler@fernuni-hagen.de}\\
\small
        {Fakult\"at f\"ur Mathematik und Informatik\\
         FernUniversit\"at in Hagen}}}
\and\hspace{-0.3cm}
\parbox{7cm}{\center
{\SC Raphael Steiner
\\[3pt]
\normalsize
\email{steiner@math.tu-berlin.de}\\
\small
        {Institut f\"ur Mathematik\\
        MA 5-1 \\
         Technische Universit\"{a}t Berlin}}}
}
\begin{document}


\maketitle

\begin{abstract}
  We introduce a new notion of circular colourings for digraphs. The
  idea of this quantity, called \emph{star dichromatic number}
  $\vec{\chi}^\ast(D)$ of a digraph $D$, is to allow a finer
  subdivision of digraphs with the same dichromatic number into such
  which are ``easier'' or ``harder'' to colour by allowing fractional
  values. This is related to a coherent notion for the vertex
  arboricity of graphs introduced in \cite{fracvert} and resembles the
  concept of the \emph{star chromatic number} of graphs introduced by
  Vince in \cite{vince} in the framework of digraph colouring. After
  presenting basic properties of the new quantity, including range,
  simple classes of digraphs, general inequalities and its relation to
  integer counterparts as well as other concepts of fractional
  colouring, we compare our notion with the notion of circular
  colourings for digraphs introduced in \cite{bokal} and point out
  similarities as well as differences in certain situations. As it
  turns out, the star dichromatic number shares all positive
  characteristics with the circular dichromatic number of Bokal et
  al., but has the advantage that it depends on the strong
  components of the digraph only, while the addition of a dominating
  source raises the circular dichromatic number to the ceiling. We
  conclude with a discussion of the case of planar digraphs and point
  out some open problems.
\end{abstract}


\section{Introduction}
Digraphs and graphs in this paper are considered loopless, but are allowed to have multiple parallel and anti-parallel arcs between vertices. Digraphs without parallel or anti-parallel edges are referred to as \emph{simple}. We will refer to edges $e$ in graphs by $uw$ where $u,w$ are the end vertices of $e$, if this does not lead to confusion with parallel edges. Given an arc (or directed edge) $e$ of a digraph, we use $e=u \rightarrow w$ or equivalently $e=(u,w)$ to express that $e$ has tail $u$ and head $w$. This is not to be understood as a proper equality but as a statement on the edge $e$. Cycles and paths in graphs and directed cycles and paths digraphs are always considered without repeated vertices.

Fractional colourings of graphs were introduced by Vince in
\cite{vince}, where the concept of the \emph{star chromatic number},
nowadays also known as the \emph{circular chromatic number} of a
graph, made its first appearance. The original definition of Vince is
based on so-called $(k,d)$-colourings, where colours at adjacent
vertices are not only required to be distinct as usual but moreover
'far apart' in the following sense: For every $k \in \mathbb{N}$ and
elements $x,y \in \{0,...,k-1\}$, let $\text{dist}_k(x,y):=|(x-y)
\text{mod } k|_k,$ where $|a|_k:=\min\{|a|,|k-a|\},$ for all
$a=0,...,k-1$, denote the {\em circular $k$-distance between $x$ and
  $y$}. Then we define:

\begin{definition}[cf. \cite{vince}]
  Let $G$ be a graph and $(k,d) \in \mathbb{N}^2, k \ge d$. A
  {\em $(k,d)$-colouring of $G$} is an assignment $c:V(G)
  \rightarrow \{0,...,k-1\} \simeq \mathbb{Z}_k$ of colours to the
  vertices so that $\text{dist}_k(c(u),c(w)) \ge d$ whenever $u,w$ are
  adjacent.
\end{definition}
Fixing a graph $G$, Vince furthermore considered the smallest possible value of $\frac{k}{d}$ where $(k,d)$ allows a legal colouring of $G$ as a fractional measure of the ``colourability'' of $G$.
\begin{definition}
Let $G$ be a graph. The quantity 
$$\chi^\ast(G):=\inf\left\{\frac{k}{d}\bigg\vert\exists (k,d) \text{-colouring of }G\right\} \in \mathbb{R}_+$$
is called the \emph{star chromatic number} resp. \emph{circular chromatic number} of $G$.
\end{definition}
The following theorem captures the most important elementary properties of the star chromatic number.
\begin{theorem}[cf. \cite{vince}] \label{FracPropUndir}
Let $G$ be a graph. Then the following holds:
\begin{enumerate}
\item[(i)] $\chi^\ast(G)$ is a positive rational number, and $\chi^\ast(G) \ge 2$ whenever $E(G) \neq \emptyset$ (otherwise $\chi^\ast(G)=1$). 
\item[(ii)] $\lceil \chi^\ast(G) \rceil=\chi(G)$, i.e., $\chi^\ast(G) \in (\chi(G)-1,\chi(G)]$.
\item[(iii)] For each rational number $q \in \mathbb{Q}, q=\frac{m}{n} \ge 2$, there is a graph $G_{m,n}$ with $\chi^\ast(G_{m,n})=\frac{m}{n}=q$.
\item[(iv)] For every $k,d \in \mathbb{N}$, there exists a $(k,d)$-colouring of $G$ if and only if $\frac{k}{d} \ge \chi^\ast(G)$.
\item[(v)] If $\chi^\ast(G)=\frac{m}{n}$, then there is a $(k,d)$-colouring of $G$ with $\frac{k}{d}=\frac{m}{n}$ and $k \leq |V(G)|$.
\end{enumerate}
\end{theorem}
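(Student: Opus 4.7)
The five parts interlock, and I would tackle them in the order (ii), (i), (iv), (v), (iii), each building on the previous.

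For (ii), the bound $\chi^*(G)\le\chi(G)$ is immediate from reading a proper $\chi(G)$-colouring as a $(\chi(G),1)$-colouring. For $\chi(G)\le\lceil\chi^*(G)\rceil$, given any $(k,d)$-colouring $c$ I would partition $\mathbb{Z}_k$ into $\lceil k/d\rceil$ consecutive cyclic arcs each of length at most $d$. Since two colours within the same arc have circular distance less than $d$, they cannot appear on adjacent vertices, so collapsing each arc to a single colour yields a proper $\lceil k/d\rceil$-colouring. This gives $\chi(G)\le\lceil k/d\rceil$ for every admissible pair, whence $\chi(G)\le\lceil\chi^*(G)\rceil$. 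Part (i) then splits: finiteness of $\chi^*(G)$ on loopless graphs follows from (ii), while the lower bound $\chi^*(G)\ge 2$ for $E(G)\neq\emptyset$ reduces to the observation that any two distinct elements of $\mathbb{Z}_k$ have circular distance at most $\lfloor k/2\rfloor$, forcing $d\le k/2$.

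For (iv), the forward direction is definitional. For the converse, given $(k,d)$ with $k/d\ge k_0/d_0$ and a $(k_0,d_0)$-colouring $c_0$, I would construct an explicit ``stretching'' map $\phi:\mathbb{Z}_{k_0}\to\mathbb{Z}_k$, for instance $\phi(i)=\lfloor ik/k_0\rfloor$, and verify by tracking floor errors that $\text{dist}_{k_0}(i,j)\ge d_0$ implies $\text{dist}_k(\phi(i),\phi(j))\ge d$, so that $\phi\circ c_0$ is the desired $(k,d)$-colouring. The inequality $k/d\ge k_0/d_0$ is exactly what makes the distance bound go through after rounding.

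Part (v) is the heart of the theorem. I would take a $(k,d)$-colouring realising $\chi^*(G)=k/d$ with $k$ minimal, and argue by contradiction. If $k>|V(G)|$, some colour is unused; I would isolate a maximal arc of unused colours and contract the cycle across it, then carefully re-shift the remaining colours so that every pair of used colours retains circular distance at least $d$ in the smaller $\mathbb{Z}_{k'}$. The main obstacle here is that naive contraction can shorten circular distances of pairs whose connecting short arc crosses the deleted segment, so the deletion and shifting must be chosen by a tightness analysis of the optimal colouring to produce a valid $(k',d)$-colouring with $k'<k$, contradicting minimality. Rationality in (i) then falls out for free, as $\chi^*(G)$ becomes a minimum over the finite set $\{k/d:1\le d\le k\le|V(G)|\}$. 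Finally, for (iii) I would set $G_{m,n}:=K_{m/n}$ on vertex set $\mathbb{Z}_m$ with edges $\{i,j\}$ whenever $\text{dist}_m(i,j)\ge n$; the identity map is an $(m,n)$-colouring, giving $\chi^*(K_{m/n})\le m/n$, while the reverse bound combines (iv) and (v) with a rotation/orbit argument showing that any $(k,d)$-colouring with $k/d<m/n$ and $k\le m$ would by pigeonhole force two vertices at $\mathbb{Z}_m$-distance $\ge n$ to share a colour.
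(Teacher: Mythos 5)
The paper does not prove this theorem; it states it with the citation ``cf.\ \cite{vince}'' and uses it only as imported background from Vince's original article. So there is no ``paper's own proof'' to compare your sketch against; I can only assess the sketch on its own terms, noting that it is broadly in the spirit of Vince's original argument.

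The serious gap is in (v), which you correctly flag as the heart of the matter. Deleting an unused colour $i$ from $\mathbb{Z}_k$ and renumbering to $\mathbb{Z}_{k-1}$ really can destroy validity: a pair of used colours whose short arc passes through $i$ and has length exactly $d$ ends up at circular distance $d-1$ afterward. You acknowledge this and say the deletion and shift must be chosen by ``a tightness analysis,'' but that is exactly where the content of (v) lives, and your sketch does not say how to choose it or why a good choice must exist. Vince's actual mechanism is different: he shows that in an extremal $(k,d)$-colouring with $\gcd(k,d)=1$, every colour class $i$ has a ``tight'' edge to class $i+d \pmod k$, so the tight edges generate a walk hitting every residue of $\mathbb{Z}_k$; hence all $k$ colours are used and $k\le|V(G)|$. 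That is a structural argument about when no colour can be spared, not a contraction argument, and it is not obviously equivalent to what you describe. Relatedly, (iii) is underproved: the identity map gives the upper bound $\chi^*(K_{m/n})\le m/n$, but ``pigeonhole forces two vertices at $\mathbb{Z}_m$-distance $\ge n$ to share a colour'' does not follow — pigeonhole only says some colour repeats, not that it repeats on an adjacent pair. The lower bound needs a separate argument (Vince's, or via $\chi_f(K_{m/n})=m/n \le \chi^*(K_{m/n})$).

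There is also a circularity in your chosen order (ii), (i), (iv), (v), (iii). From ``$\chi(G)\le\lceil k/d\rceil$ for every admissible pair'' you can only conclude $\chi(G)\le\lceil\chi^*(G)\rceil$ if there is an admissible pair with $\lceil k/d\rceil=\lceil\chi^*(G)\rceil$; when $\chi^*(G)$ is an integer this requires the infimum to be attained, i.e.\ (iv)/(v). Likewise your proof of (iv) stretches from an optimal $(k_0,d_0)$-colouring and so already presupposes attainment. The stretching map $\phi(i)=\lfloor ik/k_0\rfloor$ does work — one checks $\lfloor d_0k/k_0\rfloor\ge d$ and $\lceil(k_0-d_0)k/k_0\rceil\le k-d$ from $k/d\ge k_0/d_0$ — but the overall logical dependency has (ii) and (iv) resting on (v), not preceding it.
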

For further details concerning circular chromatic numbers of graphs we
refer to the survey article \cite{xuding}.  

A first definition of circular colourings for digraphs was given by
Bokal et al.\ in \cite{bokal}, leading to the notion of the circular
dichromatic number of digraphs and graphs. Instead of $(k,d)$-pairs as
in the case of Vince, they, equivalently, use real numbers for their
definition: Given a $p \ge 1$, consider a plane-circle $S_p$ of
perimeter $p$ and define a \emph{strong circular $p$-colouring} of $D$
to be an assignment $c:V(D) \rightarrow S_p$ of (colouring) points on
$S_p$ to the vertices, in such a way that for every edge $e=(u,w)$ in
$D$, the one-sided distance of $c(u),c(w)$ (i.e., the length of a
clockwise arc connecting $u$ to $w$ in $S_p$) is at least $1$. More
formally, we can identify $S_p$ with the set $\mathbb{R}/p\mathbb{Z}$
and require that the unique representative of $c(w)-c(u) \in
\mathbb{R}/p\mathbb{Z}$ in the interval $[0,p)$, denoted by
$(c(w)-c(u)) \text{ mod }p$ is at least one. In this representation,
the clockwise direction on $S_p$ is identified with the positive
direction in $\mathbb{R}/p\mathbb{Z}$.  Since the notion of a strong
circular $p$-colouring turns out to be much less flexible, the authors
also define so-called \emph{weak circular $p$-colourings} of $D$, $p
\in [1,\infty)$, as maps $c:V(D) \rightarrow S_p$, such that equal
colours at both ends of an edge, i.e., $c(u)=c(w)$ where $e=(u,w) \in
E(D)$, are allowed, but at the same time, the one-sided distance of
$c(u),c(w)$ on $S_p$ is at least $1$ whenever they are distinct.
Moreover, each so-called colour class, i.e., $c^{-1}(t), t \in S_p$
has to induce an acyclic subdigraph of $D$. This seems much more
intuitive and closer to the definition of legal digraph colourings. \n
The {\em circular dichromatic number $\vec{\chi}_c(D)$} now is defined
as the infimum over all real values $p \ge 1$ for which $D$ admits a
strong circular $p$-colouring, or, equivalently (as shown in their
paper), as the infimum over all values $p \ge 1$ providing weak
circular $p$-colourings of $D$. Moreover, in the case of weak circular
$p$-colourings the infimum is always attained.

\begin{proposition}[\cite{bokal}] \label{circdic}
Let $D$ be a digraph. The real value
\begin{eqnarray*}
\vec{\chi}_c(D)&:=&\inf\{p \ge 1|\exists \text{ weak circular }p\text{-colouring of } D\}\\&=&\inf\{p \ge 1|\exists \text{ strong circular }p\text{-colouring of } D\}
\end{eqnarray*}
is called the \emph{circular dichromatic number} of $D$.
Furthermore,every digraph admits a weak circular
$\vec{\chi}_c(D)$-colouring. If $G$ is a graph and $\mathcal{O}(G)$
the set of its orientations, then we define the maximum
$$\vec{\chi}_c(G):=\max_{D \in \mathcal{O}(G)}{\vec{\chi}_c(D)}$$ to be the \emph{circular dichromatic number} of the graph $G$.
\end{proposition}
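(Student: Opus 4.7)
I would prove three claims in turn: (i) $\inf\{p:\text{weak}\}\leq\inf\{p:\text{strong}\}$, (ii) the reverse inequality, and (iii) attainment of the weak infimum. The maximum over $\mathcal{O}(G)$ in the graph case is well-defined because $\mathcal{O}(G)$ is finite.

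Claim (i) is immediate: in a strong circular $p$-colouring the arc-$\geq 1$ condition on each edge forces adjacent vertices to have distinct colours, so each colour class is an independent set and trivially acyclic.

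For claim (iii), set $p^*:=\inf\{p:\text{weak}\}$ and pick a sequence $p_n\downarrow p^*$ with weak circular $p_n$-colourings $c_n$. Rescale each $c_n$ by the factor $p^*/p_n\leq 1$ to obtain $\tilde c_n:V(D)\to S_{p^*}$: colour classes are preserved (hence remain acyclic), while positive arcs on edges with distinct endpoint colours are $\geq p^*/p_n$. By compactness of $(S_{p^*})^{V(D)}$, I would extract a convergent subsequence $\tilde c_n\to c^*$; continuity of the arc length at strictly positive values shows that edges with $c^*(u)\neq c^*(w)$ have arc $\geq 1$ in the limit. The delicate step is acyclicity of the colour classes of $c^*$. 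Suppose a directed cycle $v_0\to\cdots\to v_m=v_0$ lies in one colour class of $c^*$: then for each cycle edge the positive arc under $\tilde c_n$ tends to either $0$ or $p^*$. If all arcs tend to $0$, then for large $n$ the whole cycle is contained in a single $\tilde c_n$-colour class, contradicting acyclicity of the $\tilde c_n$-classes. Otherwise, using that the total positive arc around any directed cycle is a positive integer multiple of the perimeter, a local-coordinate analysis around the common limit point shows that the positions of the ``super-nodes'' (equivalence classes of $\tilde c_n$-identified cycle vertices) would have to strictly decrease around the cycle yet return to the start, which is impossible. This winding-type argument is the crux.

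For claim (ii), given the attained $c^*$ and any $\varepsilon>0$, I would construct a strong circular $(p^*+\varepsilon)$-colouring. A naive perturbation of $c^*$ is insufficient because adjacent vertices in a multi-element colour class require arcs of length $\geq 1$ between them, not infinitesimal. The plan is a \emph{blow-up-and-compress} construction: each colour class $C$, equipped with a topological ordering $v_1,\ldots,v_{|C|}$ of the acyclic subdigraph $D[C]$, is replaced by an arc of length $|C|-1$ carrying these vertices at unit spacing, while the gaps between distinct colour classes are uniformly rescaled so that the total perimeter stays within $p^*+\varepsilon$. The principal obstacle is that the raw blow-up adds $|V(D)|-r$ to the perimeter (where $r$ is the number of colour classes), and the ensuing compression may reduce arcs between different classes below $1$; to keep within the budget $\varepsilon$, one must carefully replace $c^*$ by a near-optimal weak colouring whose colour classes are as fine as possible (a refinement step absorbing, say, half of the $\varepsilon$-budget) and then perform the blow-up within the remaining budget. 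This delicate trade-off is the main technical hurdle of the proof.
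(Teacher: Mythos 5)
The paper does not reprove Proposition~\ref{circdic}; it is cited verbatim from Bokal et al.\ \cite{bokal}, so your argument has to stand on its own. Your claim~(i) is immediate and correct, and your claim~(iii) is the right idea: rescale, pass to a convergent subsequence by compactness, and for acyclicity note that in local coordinates near the common limit point every arc of a would-be monochromatic directed cycle must eventually be exactly~$0$ (arcs smaller than $p^*/p_n$ are forbidden) or close to $p^*$ (forcing a strict drop), so the positions are non-increasing around the cycle with at least one strict decrease and cannot close up. That is a valid proof once written out carefully.

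The gap is in claim~(ii), and it is not just a ``delicate trade-off'' but a structural flaw. Consider the directed path $v_1\to v_2\to\cdots\to v_n$. Its weak infimum is~$1$, attained by placing every vertex at a single point, i.e.\ one colour class of size~$n$. Blowing this class up to unit spacing produces perimeter~$n$, and there are no inter-class gaps to compress, so your construction can never get below perimeter~$n$. Yet the strong infimum of the path is also~$1$: the colouring $c(v_i)=(i-1)\bmod(1+\varepsilon)$ is a strong circular $(1+\varepsilon)$-colouring whose image winds around $S_{1+\varepsilon}$ about $n-1$ times. Any construction that places each colour class on a contiguous sub-arc of the circle, as yours does, has winding number at most~$1$ along every directed path inside a class and therefore cannot reproduce this. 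The repair you propose --- first pass to a near-optimal weak colouring whose colour classes are ``as fine as possible'' --- is circular: a weak circular $q$-colouring with all colour classes singletons \emph{is} a strong circular $q$-colouring, so asserting that such colourings exist for $q$ arbitrarily close to $p^*$ is exactly the equality $\inf\{p:\text{strong}\}=\inf\{p:\text{weak}\}$ you set out to prove. What is actually needed in claim~(ii) is a mechanism that lets the image of an acyclic colour class wrap around the circle (for instance, using level functions on the union of the intra-class arcs), which the blow-up-and-compress scheme does not provide.
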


The following sums up the most basic properties of this quantity:

\begin{theorem}[\cite{bokal}] \label{PropertiesCircularDi}
Let $D$ be a digraph. Then the following holds:
\begin{enumerate}
\item[(i)] $\vec{\chi}_c(D) \ge 1$ is a rational number with numerator at most $|V(D)|$.
\item[(ii)] $\lceil \vec{\chi}_c(D) \rceil=\vec{\chi}(D)$, i.e., $\vec{\chi}_c(D) \in (\vec{\chi}(D)-1,\vec{\chi}(D)]$.
\item[(iii)] $\vec{\chi}_c(\cdot)$ attains exactly the rational numbers $q \in \mathbb{Q}, q \ge 1$.
\end{enumerate}
\end{theorem}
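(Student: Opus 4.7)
My plan is to handle the three parts in the order (ii), (i), (iii), since (ii) furnishes concrete comparison colourings which feed into the other two. For $\vec{\chi}_c(D)\le\vec{\chi}(D)$ in (ii), any optimal acyclic colouring $c:V(D)\to\{0,\ldots,\vec{\chi}(D)-1\}$ is automatically a strong circular $\vec{\chi}(D)$-colouring on $S_{\vec{\chi}(D)}$, because on every edge $(u,w)$ the value $(c(w)-c(u))\bmod\vec{\chi}(D)$ lies in $\{1,\ldots,\vec{\chi}(D)-1\}$ and hence has clockwise distance at least $1$. For the reverse inequality $\vec{\chi}(D)\le\lceil\vec{\chi}_c(D)\rceil$, I fix a weak circular $p$-colouring $c$ attaining $p=\vec{\chi}_c(D)$ (\cref{circdic}), set $k=\lceil p\rceil$, and partition $S_p$ into $k$ half-open arcs $A_0,\ldots,A_{k-1}$ of length $p/k\le 1$, after a small rotation so that no image of $c$ lies on a boundary. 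The integer colouring $c'(v)=i$ whenever $c(v)\in A_i$ is claimed to be acyclic on each class. If not, a directed cycle $v_1\to\cdots\to v_m\to v_1$ inside some $A_i$ must contain an inter-class edge $(v_j,v_{j+1})$, since the colour classes of $c$ are acyclic; both endpoints then lie in an arc of length $\le 1$, so the in-arc forward distance is strictly below $1$, and the clockwise-$\ge 1$ requirement forces wrapping all the way around $S_p$, which is possible only with $c(v_{j+1})<c(v_j)$ inside $A_i$. Traversing the cycle thus yields a non-increasing, non-constant sequence of colour values that is supposed to close up on itself, a contradiction.

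For (i), the inequality $\vec{\chi}_c(D)\ge 1$ is built into the definition. Rationality with numerator $\le|V(D)|$ is the main technical content. Starting again from an optimal weak circular $p$-colouring with non-empty colour classes $C_1,\ldots,C_s$, $s\le|V(D)|$, at cyclic positions $t_1,\ldots,t_s\in S_p$, I encode the consecutive gaps $d_i=(t_{i+1}-t_i)\bmod p$ as the variables of the linear program that minimises $\sum_id_i$ subject to $\sum_{i\in I_e}d_i\ge 1$ for every inter-class edge $e$ and $d_i\ge 0$, where $I_e\subseteq\{1,\ldots,s\}$ is the cyclic arc of indices crossed by $e$. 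This LP has a $0/1$ constraint matrix, so its optimum $p=\sum_id_i$ is rational and attained at a basic feasible solution with at most $s$ tight constraints; complementary slackness on the dual LP then produces a non-negative rational weighting $y_e$, supported on tight inter-class edges, satisfying $p=\sum_ey_e$ and $\sum_{e:\,i\in I_e}y_e\le 1$ for every $i$. The numerator bound then follows from a combinatorial argument: choosing the basis carefully, one extracts from the support of $y$ a simple directed closed walk in the auxiliary \emph{distance graph} on the colour classes, whose length and winding number around $S_p$ are integers bounded by $s\le|V(D)|$, giving $p=M/N$ with $M\le|V(D)|$ in lowest terms.

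For (iii), one direction is (i), and for the converse I need a digraph realising each $q=k/d\in\mathbb{Q}_{\ge 1}$. The natural candidate is the \emph{circular digraph} $D_{k,d}$ on vertex set $\mathbb{Z}_k$ with an edge $(u,v)$ whenever $(v-u)\bmod k\ge d$; the map $u\mapsto u/d\in S_{k/d}$ is a strong circular $k/d$-colouring, so $\vec{\chi}_c(D_{k,d})\le k/d$. I expect the matching lower bound, together with the numerator argument in (i), to be the main obstacle. A pigeonhole approach looks promising: in any hypothetical weak circular $p$-colouring of $D_{k,d}$ with $p<k/d$, the $k$ images on $S_p$ have average consecutive gap $<1/d$, so some block of $d$ consecutive vertices is compressed into an arc of length $<1$; this should force either a forbidden clockwise distance $<1$ on some edge of $D_{k,d}$ within that block or a directed cycle inside a single colour class.
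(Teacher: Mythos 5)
This theorem is cited from \cite{bokal}; the paper does not reprove it, but its own proofs for the analogous $\vec{\chi}^\ast$-statements explicitly ``follow the lines of'' the Bokal et al.\ argument, so that is the natural baseline for comparison.

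Your argument for (ii) is essentially the same as the paper's (for $\vec{\chi}^\ast$) and is correct in substance, with one terminological slip: an ordinary acyclic $\vec{\chi}(D)$-colouring is a \emph{weak}, not a \emph{strong}, circular $\vec{\chi}(D)$-colouring, since an edge inside a colour class has one-sided distance $0$, not $\ge 1$. The conclusion $\vec{\chi}_c(D)\le\vec{\chi}(D)$ still follows because $\vec{\chi}_c$ is defined as an infimum over weak colourings; the strong/weak distinction just needs fixing.

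For (i) you take a genuinely different route (LP duality with a circular-ones constraint matrix) from the combinatorial approach the paper adapts from Bokal et al.\ (the auxiliary ``tight-edge'' digraph $D_1(c)$ on $V(D)$, an optimal colouring maximising reachability from a reference vertex, and two $v_0$--$v$ walks of different integer lengths giving $p=\bigl(\ell(P_1)-\ell(P_2)\bigr)/m$ with $0<\ell(P_1)-\ell(P_2)<|V(D)|$). Your LP setup is sound once one observes that the optimum is a limit of strictly positive feasible points (so it really equals $\vec{\chi}_c(D)$), and rationality of basic solutions is routine. But the numerator bound is where the real content lies, and there your argument stops at a sketch: ``choosing the basis carefully, one extracts \ldots a simple directed closed walk \ldots whose length and winding number \ldots are integers bounded by $s$.'' Circular-ones matrices are not totally unimodular, so the determinant bound one needs is not free, and you do not say how the winding number of a closed walk in the dual support relates to the basis determinants, nor why the resulting fraction is in lowest terms with numerator at most $|V(D)|$. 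This is a genuine gap; the paper's two-walk argument delivers the bound directly.

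For (iii) you correctly identify the circulant witnesses $D_{k,d}=\vec{C}(k,d)$ and prove the upper bound $\vec{\chi}_c(D_{k,d})\le k/d$, but you explicitly leave the lower bound unfinished. The pigeonhole idea as stated does not close: the $d+1$ vertices whose images crowd into an arc of length $<1$ need not be consecutive in $\mathbb{Z}_k$, and it is not immediate that among an arbitrary set of $d+1$ vertices of $\vec{C}(k,d)$ one must either find a forward edge of the digraph whose image distance is $<1$ or a monochromatic directed cycle. The clean lower bound comes from $\vec{\alpha}(\vec{C}(k,d))=d$: any induced acyclic subset has a sink $a$ and avoids $\{a+d,\ldots,a+k-1\}$, so $\vec{\chi}_f\ge k/d$, and one then needs $\vec{\chi}_f\le\vec{\chi}_c$ (which is Theorem~\ref{3Inequ} in this paper, or Bokal et al.'s own direct argument). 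As written, (iii) is incomplete.
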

It was furthermore pointed out in \cite{mastert} that the following discrete notion of \emph{circular $(k,d)$-colourings} corresponds to the above notion of weak circular $p$-colourings:
\begin{definition} \label{DefCirc}
Let $D$ be a digraph and $k \ge d$ natural numbers. A \emph{circular $(k,d)$-colouring} is a vertex-colouring $c:V(D) \rightarrow \{0,...,k-1\} \simeq \mathbb{Z}_k$ such that $(c(w)-c(u)) \text{ mod }k \ge d$ or $c(u)=c(w)$ for all $e=(u,w) \in E(D)$ and each colour class $c^{-1}(i), i \in \mathbb{Z}_k$ induces an acyclic subdigraph of $D$.
\end{definition}

\begin{proposition}[\cite{mastert}]
  For every digraph $D$ and every $p \ge 1$, there exists a weak
  circular $p$-colouring of $D$ if and only if there is a circular
  $(k,d)$-colouring of $D$ for every pair $(k,d) \in \mathbb{N}^2$
  with $\frac{k}{d} \ge p$. Thus,
$$\vec{\chi}_c(D)=\inf\left\{\frac{k}{d} \bigg\vert \exists \text{ circular }(k,d)\text{-colouring of }D \right\}.$$
\end{proposition}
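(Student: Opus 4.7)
The plan is to prove the two implications separately.
For the forward direction, suppose $c\colon V(D) \to S_p$ is a weak circular $p$-colouring and $(k,d) \in \mathbb{N}^2$ satisfies $k/d \ge p$. I would first extend $c$ trivially to a weak circular $(k/d)$-colouring via the inclusion $[0,p) \subset [0,k/d)$, and then define $\phi(v) := \lfloor d\cdot c(v)\rfloor \in \{0,1,\ldots,k-1\}$. To verify the distance constraint on an edge $(u,w)$ with $\phi(u)\neq \phi(w)$, I would split into cases according to how the inequality $(c(w)-c(u))\bmod (k/d)\ge 1$ is realised: either $c(w)\ge c(u)+1$, in which case monotonicity of the floor function yields $\phi(w)\ge \phi(u)+d$; or $c(u)>c(w)$ with $c(u)-c(w)\le k/d-1$, which after multiplying by $d$ gives $\phi(u)-\phi(w)\le k-d$. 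Either way $(\phi(w)-\phi(u))\bmod k\ge d$, as required.

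For the acyclicity of each colour class $\phi^{-1}(i)$, the key observation is that all of its vertices carry a $c$-value in the short arc $[i/d,(i+1)/d)$ of length $1/d\le 1$. Hence an edge $(u,w)$ inside $\phi^{-1}(i)$ satisfies either $c(u)=c(w)$ (and so it already lies in an acyclic $c$-class by hypothesis) or $c(u)>c(w)$, because a strictly forward jump of length less than $1$ would violate the minimum distance of the weak $p$-colouring. Along a directed cycle in $\phi^{-1}(i)$ the $c$-values are therefore non-increasing and hence constant, so such a cycle must be entirely contained in a single $c$-class $c^{-1}(t)$, contradicting the assumed acyclicity of that class.

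For the converse direction, any circular $(k,d)$-colouring $\phi$ naturally rescales to a weak circular $(k/d)$-colouring via $c(v):=\phi(v)/d \in S_{k/d}$, so the hypothesis immediately produces weak circular $q$-colourings for every rational $q\ge p$. Letting $q\to p$ from above and invoking \Cref{circdic} (the infimum in the definition of $\vec{\chi}_c(D)$ is attained) yields $\vec{\chi}_c(D)\le p$, and hence a weak circular $p$-colouring exists. The displayed formula for $\vec{\chi}_c(D)$ then follows at once from the two implications. The main technical obstacle is the forward implication: the case split is routine but easy to mishandle at the wrap-around seam of $S_{k/d}$, and \emph{a priori} the floor coarsening might merge several $c$-classes and create new monochromatic cycles — the argument above rules this out precisely because the width $1/d$ of the new arcs is strictly below the minimum distance $1$ enforced by the weak colouring.
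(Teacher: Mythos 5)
Your proof is correct. Note that the paper cites this proposition from \cite{mastert} without reproducing its proof, so there is no in-paper argument to compare against directly; your approach --- a floor-rescaling for the forward direction (with the case split on whether the one-sided distance wraps around the seam, and the observation that an arc of width $1/d<1$ forces edges inside a colour class to go strictly backwards in $c$) and passing to the limit via attainment of the infimum (\cref{circdic}) for the reverse direction --- is essentially the same route the paper itself takes for the closely analogous \cref{zshg} relating acyclic $(k,d)$-colourings to acyclic $p$-colourings.
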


\section{The Star Dichromatic Number, General Properties}
In this section we introduce a new concept of fractional digraph colouring. 

\begin{definition} \label{DefDir} Let $D$ be a digraph, $(k,d) \in
  \mathbb{N}^2, k \ge d$. An \emph{acyclic $(k,d)$-colouring} of $D$
  is an assignment $c:V(D) \rightarrow \mathbb{Z}_k$ of colours to the
  vertices such that for every $i \in \mathbb{Z}_k$, the pre-image of
  the cyclic interval $A_i:=\{i,i+1,...,i+d-1\} \subseteq
  \mathbb{Z}_k$ of colours, $c^{-1}(A_i) \subseteq V(D)$, induces an
  acyclic subdigraph of $D$.
\end{definition}

It will be handy to also have an equivalent formulation allowing real numbers ready, which deals with the circles $S_p, p \ge 1$:
\begin{definition}
Let $p \in \mathbb{R}, p \ge 1$. For $a,b \in [0,p)$, we denote by $(a,b)_p$ the open ``interval'' $(a,b)_p:=\{y \in [0,p)|0<(y-a) \text{ mod }p < (b-a) \text{ mod }p \}$. Analogous definitions apply for $[a,b]_p,[a,b)_p, (a,b]_p$. In each case, we call $(b-a) \text{ mod }p$ the \emph{length} of the respective interval. For each $x \in S_p$, denote by $|x|_p:=\min\{x,p-x\}$ its two-sided distance to $0$.
\end{definition}
\begin{definition}
  Let $D$ be a digraph and $p \ge 1$. An \emph{acyclic $p$-colouring}
  of $D$ is an assignment $c:V(D) \rightarrow [0,p) \simeq
  \mathbb{R}/p\mathbb{Z}$ of ``colours'' to the vertices, such that
  for every open interval $I=(a,b)_p$ of length $1$ within $[0,p)
  \simeq \mathbb{R}/p\mathbb{Z}$, the subdigraph induced by the
  vertices in $c^{-1}(I)$ is acyclic.  The \emph{star dichromatic
    number of $D$}  now is defined as the infimum over the numbers $p$
  for which $D$ admits an acyclic $p$-colouring:
$$\vec{\chi}^\ast(D)=\inf\{p \ge 1|\exists \text{acyclic }p\text{-colouring of }D\}.$$

\end{definition}
The following ensures that there always exists a
$\vec{\chi}^\ast(D)$-colouring of a digraph $D$:
\begin{proposition} \label{closed} Let $P:=\{p \ge 1|\exists \text{ acyclic
  }p\text{-colouring of }D\} \subseteq [1,\infty)$. Then $P$ is
  closed. Furthermore, $D$ admits an acyclic
  $\vec{\chi}^\ast(D)$-colouring.
\end{proposition}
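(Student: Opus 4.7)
The plan is to first establish an easy monotonicity, then perform a compactness argument, and finally deduce the optimal-colouring statement. The monotonicity is that $P$ is upward-closed: if $p \in P$ and $p' \ge p$, then $p' \in P$. Indeed, given an acyclic $p$-colouring $c \colon V(D) \to S_p$, define $c'(v) := (p'/p)\, c(v) \in S_{p'}$. For any open arc $I' \subseteq S_{p'}$ of length $1$, pulling back through the scaling $x \mapsto (p'/p)\, x$ yields an open arc $\tilde I \subseteq S_p$ of length $p/p' \le 1$, which is contained in some open length-$1$ arc $J$ of $S_p$. Hence $c'^{-1}(I') = c^{-1}(\tilde I) \subseteq c^{-1}(J)$, and the latter induces an acyclic subdigraph of $D$. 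Consequently, to prove closedness of $P$ it suffices to show that for every decreasing sequence $p_n \downarrow p$ with $p_n \in P$, one has $p \in P$.

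For this, I would pick acyclic $p_n$-colourings $c_n \colon V(D) \to S_{p_n}$ and normalise to a common compact space by setting $\bar c_n(v) := c_n(v)/p_n \in S_1 := \mathbb{R}/\mathbb{Z}$. Since $S_1^{V(D)}$ is a compact metric space (a finite product of copies of the circle), some subsequence $(\bar c_{n_k})_k$ converges pointwise to a limit $\bar c \colon V(D) \to S_1$; define the candidate limit colouring $c(v) := p \cdot \bar c(v) \in S_p$. The main obstacle is to verify acyclicity of $c$. Suppose for contradiction that some open arc $I \subseteq S_p$ of length $1$ has pre-image $c^{-1}(I)$ containing a directed cycle $C$ of $D$. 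Then the scaled open arc $\bar I \subseteq S_1$ of length $1/p$ contains each $\bar c(v)$ for $v \in V(C)$; since $V(C)$ is finite, a trimmed open sub-arc $\bar I_\delta \subseteq \bar I$ of length $1/p - 2\delta$ still contains all these values for some $\delta > 0$. For $k$ sufficiently large, the perturbations of colours (in circular distance on $S_1$) and the difference $|1/p_{n_k} - 1/p|$ are both less than $\delta/4$; a suitable slight translate $\bar J_k \subseteq S_1$ of length exactly $1/p_{n_k}$, obtained by shifting one endpoint of $\bar I_\delta$ by $\delta/4$ and the other by the residual amount, still contains all $\bar c_{n_k}(v)$ for $v \in V(C)$. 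Rescaling by $p_{n_k}$, we conclude that $V(C)$ lies in the pre-image of an open length-$1$ arc of $S_{p_{n_k}}$ under $c_{n_k}$, contradicting the acyclicity of $c_{n_k}$. Hence $c$ is an acyclic $p$-colouring, i.e., $p \in P$, and $P$ is closed.

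Finally, when $D$ is loopless I would show $P \neq \emptyset$ by verifying that $\vec{\chi}(D) \in P$: any proper acyclic vertex-colouring $c \colon V(D) \to \{0, 1, \ldots, \vec{\chi}(D)-1\} \subseteq S_{\vec{\chi}(D)}$ constitutes an acyclic $\vec{\chi}(D)$-colouring in the sense of \cref{DefDir}, since any open arc of length $1$ in $S_{\vec{\chi}(D)}$ contains at most one of the integer colours and so pulls back to at most one colour class. Since $P$ is nonempty, closed, and bounded below by $1$, its infimum $\vec{\chi}^\ast(D)$ belongs to $P$, yielding the desired $\vec{\chi}^\ast(D)$-colouring.
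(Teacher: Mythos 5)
Your proof is correct and follows essentially the same route as the paper's: reduce to a decreasing sequence $p_n \downarrow p$ (which the paper dismisses with ``Clearly, we may assume $p_n>p$'' but you justify via the upward-closedness of $P$), rescale the colourings to a common circle to invoke compactness of $S_p^{|V(D)|}$, extract a convergent subsequence, and derive a contradiction from a directed cycle landing in a length-$1$ arc by trimming the arc slightly and pushing the contradiction back to $c_{n_k}$ for large $k$. The only cosmetic difference is that you normalise to $S_1$ rather than $S_p$, and you spell out explicitly that $\vec{\chi}(D)\in P$ to get $P\neq\emptyset$.
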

\begin{proof}
  Since $P$ is bounded from below, the latter claim is a consequence
  of the former. Let $(p_n)_{n \in \mathbb{N}}$ be a sequence of
  elements of $P$ convergent to some $p \ge 1$. We have to show that
  $p \in P$. Clearly, we may assume $p_n>p$ for all $n \in
  \mathbb{N}$.  For given $n$ let $c_n':V(D) \rightarrow [0,p_n)$
  denote a feasible $p_n$-colouring of $D$.  Scaling by
  $\frac{p}{p_n}$ we derive maps $c_n:V(D) \rightarrow [0,p),
  x\mapsto \frac{p}{p_n}c_n'(x)$ with the property that for every
  open interval $I \subseteq \mathbb{R}/p\mathbb{Z}$ of length at most
  $\frac{p}{p_n}$ there is no directed cycle in the digraph induced by
  $c_n^{-1}(I)$.
  We may consider $(c_n)_{n \in \mathbb{N}}$ as a sequence of vectors
  in $S_p^{|V(D)|}$. Applying the Theorem of Heine-Borel to $(c_n)_{n
    \in \mathbb{N}}$ yields a convergent subsequence $(c_{n_l})_{l \in
    \mathbb{N}}$. Let $c:=\lim_{l \rightarrow \infty}{c_{n_l}}$. Then
  $c:V(D) \rightarrow [0,p)$.  We claim that $c$ defines an
  acyclic $p$-colouring of $D$. 

  Assume to the contrary there was a directed cycle $C$ in $D$ such
  that $c(V(C))$ is contained in an open interval $I=(a,b)_p \subseteq
  S_p \simeq [0,p)$ of length $1$. Since $c(V(C))$ is finite, there
  exists $0<\varepsilon<\frac{1}{2}$ such that $c(V(C)) \subseteq
  (a+\varepsilon,b-\varepsilon)_p \subseteq (a,b)_p$.  Since $D$ is
  finite, $(c_{n_l})_{l \in \mathbb{N}}$ is a sequence convergent in
  ${S_p^{|V(D)|}}$ and $\lim_{n \to \infty}p_n=p$, we may choose $N
  \in \mathbb{N}$ such that $|c_{N}(x) - c(x)|<\frac{\varepsilon}{2}$
  for all $x \in V(D)$ and $p_{N}<\frac{p}{1-\varepsilon}$.  Now,
  $c_N(V(C)) \subseteq (a+\frac{\varepsilon}{2},b -\frac{\varepsilon}{2})
  \text{ mod } p$. Hence, we have found a directed cycle in the inverse
  image of an open interval of length $1-\varepsilon < \frac{p}{p_N}$,
  contradicting the properties of $c_N$.

  As $D$ is considered loopless, $P \neq \emptyset$ and thus $P$ is closed and
  bounded from below, which implies that it admits a minimum.
\end{proof}

The following equivalence now makes the relation between the discrete notion and the real-number-notion of acyclic colourings of digraphs precise:

\begin{proposition} \label{zshg} Let $D$ be a digraph. Then for every real
  number $p \ge 1$, $D$ admits an acyclic $p$-colouring if and only if
  it admits an acyclic $(k,d)$-colouring for every $(k,d) \in
  \mathbb{N}^2$ fulfilling $\frac{k}{d} \ge p$. Consequently,
$$\vec{\chi}^\ast(D)=\inf\left\{\frac{k}{d}\bigg\vert\exists \text{acyclic } (k,d)\text{-colouring of }D\right\}.$$
\end{proposition}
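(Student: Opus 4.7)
The plan is to prove both directions of the biconditional; the infimum formula then follows as an immediate corollary.

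For the forward direction (an acyclic $p$-colouring yields an acyclic $(k,d)$-colouring whenever $k/d \geq p$), I would take an acyclic $p$-colouring $c: V(D) \to [0,p)$ and discretise it by $c'(v) := \lfloor k c(v)/p \rfloor \in \mathbb{Z}_k$. A direct computation identifies the preimage $(c')^{-1}(A_i)$ with $c^{-1}(J_i)$, where $J_i = [ip/k,(i+d)p/k)_p$ is a half-open cyclic interval in $S_p$ of length $\ell := dp/k \leq 1$. If $\ell < 1$ then $J_i$ is trivially contained in an open interval of length $1$, so acyclicity is immediate. The only subtle case is $\ell = 1$ (i.e.\ $k/d = p$), where a half-open interval of length $1$ is not contained in any open interval of length $1$. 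However, since $c$ takes only finitely many values, a sufficiently small shift of the endpoints of $J_i$ produces an open interval $J_i'$ of length $1$ that still contains every value of $c$ lying in $J_i$; then $c^{-1}(J_i) \subseteq c^{-1}(J_i')$ is acyclic.

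For the reverse direction, I would first establish the auxiliary observation that an acyclic $(k,d)$-colouring $c: V(D) \to \mathbb{Z}_k$ induces an acyclic $(k/d)$-colouring via the rescaling $c'(v) := c(v)/d \in [0, k/d)$. To see this, note that for any open interval $I = (a,b)_{k/d}$ of length $1$, the preimage $(c')^{-1}(I)$ equals $c^{-1}(dI)$, where $dI$ is an open interval of length $d$ in $S_k$; such an interval meets at most $d$ consecutive residues of $\mathbb{Z}_k$, so it is contained in some $A_j$, and hence $(c')^{-1}(I) \subseteq c^{-1}(A_j)$ is acyclic by hypothesis. Now, given the hypothesis of the proposition, I would choose a sequence of rationals $k_n/d_n$ decreasing to $p$; acyclic $(k_n,d_n)$-colourings exist by hypothesis, the auxiliary observation converts these into acyclic $(k_n/d_n)$-colourings, and so $k_n/d_n \in P$ for every $n$. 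Since \cref{closed} asserts that $P$ is closed, the limit $p$ lies in $P$, providing the required acyclic $p$-colouring.

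The equality $\vec{\chi}^\ast(D) = \inf\{k/d : \exists \text{ acyclic }(k,d)\text{-colouring of }D\}$ follows at once: the auxiliary observation shows that every rational $k/d$ admitting an acyclic $(k,d)$-colouring lies in $P$, while the forward direction shows conversely that every $p \in P$ is approximated from above by rationals $k/d$ admitting an acyclic $(k,d)$-colouring. I expect the main technical obstacle to be the boundary case $\ell = 1$ in the forward direction, which cannot be handled by a direct appeal to the defining property of an acyclic $p$-colouring and instead requires exploiting the finiteness of the vertex set.
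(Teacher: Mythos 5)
Your proof is correct and follows essentially the same route as the paper: both directions use the floor-discretisation $\lfloor kc(v)/p\rfloor$ and the rescaling $c(v)/d$, and the reverse direction concludes via the closedness of $P$ from \cref{closed}. The only real difference is presentational — you isolate the rescaling step as an explicit auxiliary lemma and treat the boundary case $dp/k=1$ more carefully than the paper does (the paper's claim $c(V(C))\subseteq(\tfrac{ip}{k},\tfrac{ip}{k}+1)_p$ quietly ignores the possibility that the left endpoint is attained, a gap which your finite-shift argument closes), but the underlying decomposition and key ideas are identical.
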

\begin{proof}
Assume for the first implication there was an acyclic $p$-colouring $c:V(D) \rightarrow [0,p)$ of $D$ and let $(k,d) \in \mathbb{N}^2$ with $\frac{k}{d} \ge p$ be arbitrary. Define a colouring $c_{k,d}$ of the vertices by
$$\forall x \in V(D): c_{k,d}(x)=\left\lfloor \frac{k}{p}c(x) \right\rfloor \in \{0,....,k-1\}.$$
We claim that this defines an acyclic $(k,d)$-colouring of $D$: Assume
to the contrary there was a directed cycle $C$ within
$c_{k,d}^{-1}(A_i)$ for some $i \in \{0,....,k-1\} \simeq
\mathbb{Z}_k$. Then for all $x \in V(C)$, $$\left(\left\lfloor
    \frac{k}{p}c(x) \right\rfloor-i\right) \text{ mod }k \leq d-1
\Rightarrow \left(\frac{k}{p}c(x)-i\right) \text{ mod }k<d.$$
Consequently, $(c(x)-\frac{ip}{k}) \text{ mod
}p=\frac{p}{k}((\frac{k}{p}c(x)-i) \text{ mod }k) <\frac{p}{k/d} \leq
1$. Hence, $c(V(C)) \subseteq (\frac{ip}{k},\frac{ip}{k}+1)_p$,
contradicting the definition of an acyclic colouring. 

For the reverse implication, assume that $p \ge 1$ such that for every
pair $(k,d) \in \mathbb{N}^2$ with $\frac{k}{d} \ge p$, there is an
acyclic $(k,d)$- colouring $c_{(k,d)}:V(D) \rightarrow \{0,...,k-1\}$
of $D$.  Let $\left((k_n,d_n)\right)_{n \in \mathbb{N}}$ be some
sequence in $\mathbb{N}^2$ such that $p_n:=\frac{k_n}{d_n} \ge p$ for
all $n \in \mathbb{N},$ and $\lim_{n \to \infty}\frac{k_n}{d_n}=p$.
Let $c_n=c_{(k_n,d_n)}:V(D) \to \{0,\ldots,k_n-1\}$ denote
corresponding acyclic $(k_n,d_n)$-colourings of $D$. We define
$c_{p_n}:V(D) \rightarrow [0,p_n)$ by
$$x\mapsto \frac{p_n}{k_n}c_n(x) \in [0,p_n).$$ 
We claim that for every $n$ this defines an acyclic $p_n$-colouring.
Assume to the contrary there was a cyclic open subinterval
$(a,b)_{p_n} \subseteq [0,p_n)$ of length $1$ containing the colours
of a directed cycle $C$ in $D$, then for every $x \in V(C)$, we would
have $$0<\left(\frac{p_n}{k_n}c_n(x)-a\right) \text{ mod }p_n<1
\Leftrightarrow 0<(c_n(x)-d_na) \text{ mod }k_n<\frac{k_n}{p_n}=d_n$$
and thus, with $i:=\lceil d_na \rceil \text{ mod }k_n$, we get $0 \leq
(c_n(x)-i) \text{ mod }k_n \leq d_n-1$ for all $x \in V(C)$, implying
$c_n(V(C)) \subseteq A_i$. This contradicts $c_n$ being an acyclic
$(k_n,d_n)$-colouring and shows that indeed, $p_n \in P, n \ge 1$,
where again, $P$ denotes the set of $p \ge 1$ allowing an acyclic
colouring of $D$. Since $P$ is closed (Proposition \ref{closed}), we
finally deduce that $p=\lim_{n \rightarrow \infty}{p_n} \in P$, and
thus the claimed equivalence follows.
\end{proof}
Although theoretically, the definition of $\vec{\chi}^\ast(D)$ as the
infimum of the set $P$ of real numbers might include irrational values
of $\vec{\chi}^\ast(D)$, the following statement shows that due to the
conditions on acyclic $p$-colourings which are given in terms of
a finite object, namely $D$, $\vec{\chi}^\ast(D)$ only attains
rational numbers with a certain bound on the numerator. Analogous
statements hold for other notions of circular colourings.
\begin{theorem} \label{rational}
Let $D$ be a digraph, $n=|V(D)|$. Then $\vec{\chi}^\ast(D)$ is a rational number of the form $\frac{k}{d}$ with $1 \leq d \leq k \leq n$.
\end{theorem}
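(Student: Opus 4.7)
My plan is to fix an optimal acyclic $p^\ast$-colouring $c : V(D) \to [0, p^\ast)$ (which exists by Proposition \ref{closed}, with $p^\ast := \vec{\chi}^\ast(D)$) and extract from it an acyclic $(k,d)$-colouring with $k/d = p^\ast$ and $k \le n$.

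Let $T := c(V(D))$ and $m := |T|$, so that $m \le n$. Enumerate $T$ in cyclic order as $t_1 < t_2 < \cdots < t_m$ and define gap lengths $g_i := t_{i+1} - t_i$ (indices cyclic modulo $m$, with $g_m := p^\ast - t_m + t_1$), so $g_i > 0$ and $\sum_i g_i = p^\ast$. The acyclicity of $c$ translates, for every directed cycle $C$ of $D$, into the statement that $c(V(C))$ does not fit inside any open interval of length $1$; equivalently, every cyclic arc of $S_{p^\ast}$ between two colours of $C$ that are consecutive in the cyclic order on $T$ --- a sum $\sum_{i \in I} g_i$ over some cyclic interval $I \subseteq \{1, \ldots, m\}$ --- has length at most $p^\ast - 1$. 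Hence $(g_i)$ is an optimal solution of a linear program in $m$ variables with $\{0,1\}$-coefficient matrix and integer right-hand sides; by standard LP theory the optimum is attained at a basic feasible solution with rational entries, which establishes $p^\ast \in \mathbb{Q}$.

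For the bound $k \le n$, I would construct the integerisation $c' : V(D) \to \mathbb{Z}_m$ by setting $c'(v) := i - 1$ whenever $c(v) = t_i$, and choose $d$ as the largest integer such that $\sum_{j=0}^{d-2} g_{i+j} < 1$ for every $i$. Then $c^{-1}(\{t_i, \ldots, t_{i+d-1}\})$ is contained in an open interval of length $1$ and therefore acyclic, so $c'$ is an acyclic $(m, d)$-colouring of $D$ and $m/d \ge p^\ast$. Setting $k := m \le n$ then finishes the proof once equality $m/d = p^\ast$ is established.

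The main obstacle is achieving this equality. The strategy is to argue that one may choose $c$ with equally spaced colour positions $t_i = (i-1)\,p^\ast/m$: then $(d-1)\,p^\ast/m < 1$, and if one had $d \cdot p^\ast/m < 1$ as well, the integer $d+1$ would also satisfy the defining inequality and give a valid acyclic $(m, d+1)$-colouring with ratio $m/(d+1) < p^\ast$, contradicting the infimum characterisation of $p^\ast$. This forces $d \cdot p^\ast/m \ge 1$ together with $(d-1)\,p^\ast/m < 1$, which with $d \in \mathbb{Z}$ yield $d = m/p^\ast \in \mathbb{Z}$ and hence $m/d = p^\ast$, giving $k = m \le n$. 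Producing such an equi-spaced optimal colouring from an arbitrary optimal $c$ is the delicate step, and would be approached either by a symmetrisation argument exploiting the rotational invariance of $S_{p^\ast}$, or by an LP-vertex analysis leveraging the cyclic-interval structure of the constraints to exhibit an optimal basic feasible solution with the required symmetry.
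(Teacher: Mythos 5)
Your rationality argument via the gap-length LP is a genuinely different route from the paper and is essentially sound: once one notes that the original gap vector $(g_i)$ is feasible and that \emph{any} feasible vector yields a valid acyclic colouring (so the LP optimum equals $p^\ast$), rationality of a basic optimal solution does give $p^\ast\in\mathbb{Q}$. The paper instead argues combinatorially: after modifying $c$ so that every vertex is reachable from a reference vertex in the auxiliary ``tight-edge'' digraph $D_1(c)$ (arcs $(u,w)$ with $(c(w)-c(u))\bmod p=1$), either two $v_0$--$v$ walks of different lengths $\ell_1>\ell_2<n$ force $mp=\ell_1-\ell_2$ for some $m\ge 1$, giving $p=(\ell_1-\ell_2)/m$ with numerator $<n$; or all reachability lengths agree, in which case $p$ can be shrunk, contradicting minimality. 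That single argument delivers rationality and the numerator bound simultaneously.

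The numerator bound in your proposal is where it breaks down, and you acknowledge this yourself. Everything hinges on the claim that one may take an \emph{optimal} $p^\ast$-colouring whose used colour values are equally spaced, $t_i=(i-1)p^\ast/m$. That claim is not proved, and it is not a routine consequence of either suggestion you float. Symmetrisation over rotations of $S_{p^\ast}$ does not produce a single equi-spaced optimal colouring, because the feasibility constraints depend on the ordinal assignment $c'$ of vertices to positions, and rotating/averaging $c$ changes the ordinal type. The LP-vertex suggestion has the same problem: the LP you wrote is parametrised by the fixed ordinal assignment, its basic optimal solutions are in general \emph{not} the equal-spacing vector (the constraint rows are indicators of cyclic intervals, which is not a totally unimodular structure), and there is no argument that some ordinal type admits an equi-spaced basic optimum. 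Without the equal-spacing claim your chain ``$m/d\ge p^\ast$ and $dp^\ast/m\ge 1$ hence $d=m/p^\ast$'' collapses, since the second inequality was derived from equal spacing. So as written, the proposal establishes rationality (modulo the small missing justification above) but does not establish $k\le n$, which is the substantive part of the theorem.
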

\begin{proof}
  Our proof follows the lines of the one given for the 
  same result for $\vec{\chi}_c(D)$ in \cite{bokal} resp.\
  \cite{bojanbok}.  

  Let in the following $p:=\vec{\chi}^\ast(D)$. We may assume $p>1$. For a given acyclic
  $p$-colouring $c:V(D) \rightarrow S_p \simeq [0,p)$ of $D$ we
  consider the digraph $D_1(c)$, defined over the vertex set $V(D)$
  where $(u,w) \in E(D_1(c))$ if and only if $(c(w)-c(u)) \text{ mod
  }p=1$. Let $v_0 \in V(D)$ be a fixed reference vertex, we may assume
  that $c(v_0)=0$. We will show that we can choose $c$ such that for
  every vertex $v \in V(D)$, there is a directed path from $v_0$ to
  $v$ in $D_1(c)$. For this purpose, let $c$ be an acyclic
  $p$-colouring maximal with respect to the cardinality of the set
  $S(c)$ of vertices reachable from $v_0$ via directed paths in
  $D_1(c)$. Assume for a contradiction that $S(c) \neq V(D)$.  For $s \in
  [0,p),$ we define
  $$c_s(v):=\begin{cases} c(v), & \text{if }v \in S(c) \cr (c(v)-s)
    \text{ mod }p & \text{if }v \notin S(c).
  \end{cases}$$ Note that for each $s \in [0,p)$ so that $c_s$ is an acyclic $p$-colouring, we have $S(c_s) \supseteq S(c)$, and due to the maximality of $c$, $S(c_s)=S(c)$. Now, choose $s^\ast$ maximal with the property, that
  for all $s<s^\ast$ $c_s$ is an acyclic
  $p$-colouring. The assumption $S(c) \ne V(D)$
  now implies that $0 < s^\ast < p$ and $c_{s^\ast + \varepsilon}$ is
  not an acyclic $p$-colouring for arbitrarily small values of $\varepsilon
  >0$.  Therefore, there must exist a closed interval $[a,b]_p\subseteq
  S_p$ of length $1$ such that $c_{s^\ast}^{-1}([a,b]_p)$
  contains the vertices of a directed cycle $C$ and such that there are $u,w \in V(C)$ with $c_{s^\ast}(u)=a,c_{s^\ast}(w)=b$ and $u \in S(c), w \notin S(c)$. But this implies that $S(c)\cup \{w\} \subseteq
  S(c_{s^\ast})$ contradicting the choice of $c$.

  We now consider the case that there exists a vertex $v \in V(D)
  \setminus \{v_0\}$ and two directed $v_0$-$v$-walks $P_1$ and $P_2$
  of lengths $\ell(P_1)> \ell(P_2)$ that visit at most one vertex
  (possibly $v$) twice. This includes the case, that there exists a
  directed cycle in $D_1(c)$. Since $c(v)=\ell(P_1) \text{ mod
  }p=\ell(P_2) \text{ mod }p$ there exists some $m\in \mathbb{N}$ such
  that $mp=\ell(P_1)-\ell(P_2)$. But clearly $0<\ell(P_1)-\ell(P_2)<n$
  and hence $p=\frac{\ell(P_1)-\ell(P_2)}{m}$ as required.

  Thus we may assume, that for all vertices in $v \in V(D)$ all
  directed $v_0$-$v$ paths have the same length, defining a map $f:V
  \to \mathbb{N}, v \mapsto \ell(P_v)$ and $f(v) \text{ mod }p=c(v)$
  for all $v \in V(D)$. We will show that this contradicts the
  minimality of $p$.  For that purpose choose $\delta>0$ such that
  $p-\delta>1$ and for each pair $u,w \in V(D)$ of vertices with
  $(f(w)-f(u)) \text{ mod }p>1$, we have $(f(w)-f(u)) \text{ mod
  }(p-\delta)>1$. We claim that $x \mapsto c_{-\delta}(x):=f(x) \text{
    mod }(p-\delta)$ defines an acyclic $(p-\delta)$-colouring of $D$.
  Assume to the contrary there was a directed cycle $C$ in $D$ such
  that its image under $c_{-\delta}$ is contained in a closed interval
  $[c_{-\delta}(u),c_{-\delta}(w)]_{p-\delta} \supseteq
  c_{-\delta}(V(C))$ of length $<1$, where $u,w \in V(C)$. Let in the
  following $x \in V(C)$ be arbitrary. Then
  $(c_{-\delta}(x)-c_{-\delta}(u)) \text{ mod }(p-\delta)=(f(x)-f(u))
  \text{ mod }(p-\delta)<1$ and thus $(u,x) \notin
  E(D_1(c))$ and  $(f(x)-f(u)) \text{ mod }p =(c(x)-c(u)) \text
  {mod }p \leq 1$. We conclude, $(c(x)-c(u)) \text{ mod }p<1$ and there exists
  $\varepsilon >0 $ such that $(c(x)-c(u)) \text{ mod
  }p<1-\varepsilon$ for all $x \in V(C)$, contradicting
  $c$ being an acyclic
  $p$-colouring. 

 The claim follows.
\end{proof}
\begin{corollary}
  For a digraph $D$ we have $\vec{\chi}^\ast(D) \ge 1$ with equality
  if and only if $D$ is acyclic.
\end{corollary}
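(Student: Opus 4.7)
The inequality $\vec\chi^\ast(D)\ge 1$ is immediate from the definition, since the infimum defining $\vec\chi^\ast(D)$ runs over real numbers $p\ge 1$. For the equality claim my plan is to route everything through the equivalence between real parameters and $(k,d)$-colourings provided by Proposition \ref{zshg}, reducing the problem to the extremal case $(k,d)=(1,1)$. This has the pleasant side effect of side-stepping any delicate interpretation of an ``open interval of length $1$'' on a circle of perimeter $1$, where the formula for $(a,b)_p$ degenerates.

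The key observation I would exploit is that a $(1,1)$-colouring of $D$ can only be the constant map $c\equiv 0$, and the sole cyclic interval $A_0=\{0\}$ has pre-image $V(D)$; hence an acyclic $(1,1)$-colouring of $D$ exists if and only if $D$ itself is acyclic. With this in hand, Proposition \ref{zshg} will handle both directions in one stroke. If $D$ is acyclic, then the constant colouring $c\equiv 0$ is actually an acyclic $(k,d)$-colouring for \emph{every} pair $(k,d)$ with $k\ge d$, because each pre-image $c^{-1}(A_i)$ is either $V(D)$ or $\emptyset$, both of which induce acyclic subdigraphs; Proposition \ref{zshg} then yields an acyclic $1$-colouring and therefore $\vec\chi^\ast(D)\le 1$. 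Conversely, if $\vec\chi^\ast(D)=1$, then $1/1\ge \vec\chi^\ast(D)$, and Proposition \ref{zshg} directly produces an acyclic $(1,1)$-colouring, forcing $D$ to be acyclic.

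I do not foresee any genuine obstacle; the statement is essentially a bookkeeping consequence of Proposition \ref{zshg}. The only point I would mention explicitly is that the infimum in the definition of $\vec\chi^\ast(D)$ is to be read in $[1,\infty]$ (equal to $+\infty$ if the set of feasible $p$ is empty, as when $D$ has a loop), so values strictly below $1$ simply cannot arise and the corollary covers all digraphs $D$.
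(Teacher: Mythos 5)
Your proof is correct and routes through the discrete $(k,d)$-formulation rather than the real-valued one. The paper's own proof stays on $S_1$: from $\vec\chi^\ast(D)=1$ it produces an acyclic $1$-colouring and argues that, since $V(D)$ is finite, all colours fit inside a cyclic interval of length less than $1$, forcing $D$ to be acyclic, with the easy converse direction left implicit. You instead pass through Proposition \ref{zshg} and the observation that the only $(1,1)$-colouring is the constant map with $c^{-1}(A_0)=V(D)$. Your remark about the degeneracy of $(a,b)_p$ on $S_1$ is well taken: the formula $(b-a)\bmod p$ never equals $1$ when $p=1$, so the discrete route avoids having to interpret ``open interval of length $1$'' charitably in this boundary case. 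One small point: in the converse direction you say Proposition \ref{zshg} ``directly produces'' a $(1,1)$-colouring from $\vec\chi^\ast(D)=1$; strictly, one first needs Proposition \ref{closed} to know the infimum $1$ is actually attained by an acyclic $1$-colouring, and only then does the forward implication of Proposition \ref{zshg} supply the $(1,1)$-colouring. The paper makes the same silent appeal, so this is a matter of explicit citation rather than a gap, and you do spell out the forward direction that the paper omits, which is the more complete treatment.
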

\begin{proof}
  The inequality holds by definition.  $\vec{\chi}^\ast(D) = 1$
  implies the existence of an acyclic $1$-colouring of $D$, and thus,
  since $V(D)$ is finite, that $D$ is acyclic.
\end{proof}
The following describes the relationship of $\vec{\chi}^\ast(D)$ with its integer counterpart.
\begin{theorem}
Let $D$ be a digraph. Then $\lceil \vec{\chi}^\ast(D)\rceil=\vec{\chi}(D)$, i.e., $\vec{\chi}(D)-1<\vec{\chi}^\ast(D) \leq \vec{\chi}(D)$.
\end{theorem}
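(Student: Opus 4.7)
The plan is to prove the two inequalities $\vec{\chi}^\ast(D) \le \vec{\chi}(D)$ and $\vec{\chi}(D) \le \lceil \vec{\chi}^\ast(D) \rceil$ separately; together they give $\vec{\chi}(D) - 1 < \vec{\chi}^\ast(D) \le \vec{\chi}(D)$, which is equivalent to $\lceil \vec{\chi}^\ast(D) \rceil = \vec{\chi}(D)$. If $D$ contains a loop then both sides equal $\infty$ and there is nothing to show, so I may assume $D$ is loopless and in particular invoke \cref{closed}.

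For the upper bound $\vec{\chi}^\ast(D) \le \vec{\chi}(D)$, I would take a proper dicolouring $c \colon V(D) \to \{0,1,\ldots,k-1\}$ with $k = \vec{\chi}(D)$, so that each colour class induces an acyclic subdigraph. Reinterpreting $c$ as a map into $[0,k) \simeq \mathbb{R}/k\mathbb{Z}$, the key observation is that any open cyclic interval of length exactly $1$ in $[0,k)$ contains at most one integer, so its preimage under $c$ lies inside a single colour class and therefore induces an acyclic subdigraph. Hence $c$ is an acyclic $k$-colouring of $D$ in the sense of the definition, yielding $\vec{\chi}^\ast(D) \le k$. (Alternatively, this proper dicolouring is an acyclic $(k,1)$-colouring in the sense of \cref{DefDir}, and the conclusion follows from the formula in \cref{zshg}.)

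For the lower bound $\vec{\chi}(D) \le \lceil \vec{\chi}^\ast(D) \rceil$, set $k := \lceil \vec{\chi}^\ast(D) \rceil$. By \cref{closed}, $D$ admits an acyclic $\vec{\chi}^\ast(D)$-colouring; since $k/1 = k \ge \vec{\chi}^\ast(D)$, the forward direction of \cref{zshg} (applied with $(k,d) = (k,1)$) produces an acyclic $(k,1)$-colouring $c \colon V(D) \to \mathbb{Z}_k$ of $D$. In this case each cyclic interval $A_i$ in \cref{DefDir} is the singleton $\{i\}$, so the acyclicity condition on $c^{-1}(A_i)$ reduces to the statement that every colour class induces an acyclic subdigraph; i.e., $c$ is a proper $k$-dicolouring. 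This gives $\vec{\chi}(D) \le k = \lceil \vec{\chi}^\ast(D) \rceil$.

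There is no genuine obstacle here: the main point is the conceptual identification between acyclic $(k,1)$-colourings and proper $k$-dicolourings, after which both inequalities are immediate from the equivalence in \cref{zshg} and the closedness of the colouring spectrum established in \cref{closed}. The only care needed is the separate (trivial) treatment of digraphs with loops.
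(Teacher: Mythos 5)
Your proof is correct and follows essentially the same approach as the paper: both inequalities hinge on identifying acyclic $(k,1)$-colourings with proper $k$-dicolourings. The only difference is in the lower bound $\vec{\chi}(D)\le\lceil\vec{\chi}^\ast(D)\rceil$, where you invoke \cref{closed} and the forward direction of \cref{zshg} with $(k,d)=(k,1)$, whereas the paper re-derives that special case by hand, explicitly constructing $k$ pairwise disjoint cyclic subintervals of length less than $1$ covering all colour points; your route is the slightly cleaner reduction to existing machinery.
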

\begin{proof}
  The latter inequality is an immediate consequence from Proposition
  \ref{zshg} and the fact that the acyclic $(k,1)$-colourings of $D$
  correspond exactly to legal $k$-digraph colourings of $D$ in the
  usual sense, for $k \in \mathbb{N}$. On the other hand let
  $p:=\vec{\chi}^\ast(D), k:=\lceil p \rceil \in \mathbb{N} $ and let
  $c:V(D) \rightarrow S_p$ denote an acyclic $p$-colouring of $D$.
  Since $p\le k$ and $V(D)$ is finite, we find $k$ pairwise disjoint
  cyclic subintervals $I_1,...,I_k$, each of length less than $1$ such
  that all $v \in V(D)$ are mapped to the interior of one of these.
  Thus $c^{-1}(I_i), i=1,...,k$ induces an acyclic subdigraph of $D$,
  this way defining a $k$-digraph colouring of $D$, proving $k
  \geq \vec{\chi}(D)$.
\end{proof}

\section{Relations to Other Fractional Digraph Colouring Parameters}
We briefly review the notions of the fractional chromatic
numbers of graphs and digraphs in order to draw a comparison with our
new fractional colouring number. The fractional dichromatic number
will be a main tool for deriving lower bounds on star dichromatic
numbers.

 \pagebreak[3]
\begin{definition}[cf.\ \cite{kneser} and \cite{frdichr}] \noindent
\begin{itemize}
\item[(A)] Let $G$ be a graph. Denote by $\mathcal{I}(G)$ the collection of independent vertex subsets of $G$, and for each $v \in V(D)$, let $\mathcal{I}(G,v) \subseteq \mathcal{I}(G)$ be the subset containing only those sets including~$v$.  The \emph{fractional chromatic number} $\chi_f(G)$ of $D$ is now defined as the value of the following linear program
\begin{align}
&\min \sum_{I \in \mathcal{I}(G)}{x_I}\\ 
\text{ subj.\ to }&
\sum_{I \in \mathcal{I}(G,v)}{x_I} \ge 1, \text{ for all } v \in V(G) \nonumber\\
& x \ge 0.\nonumber
\end{align}
\item[(B)] 
  Let $D$ be a digraph. Denote by $\mathcal{A}(D)$ the collection of
  vertex subsets of $D$ inducing an acyclic subdigraph, and for each
  $v \in V(D)$, let $\mathcal{A}(D,v) \subseteq \mathcal{A}(D)$ be the
  subset containing only those sets including $v$.  The
  \emph{fractional dichromatic number} $\vec{\chi}_f(D)$ of $D$ is now
  defined as the value of 
 \begin{align} \label{primal}
&\min \sum_{A \in \mathcal{A}(D)}{x_A}\\
\text{  subj.\ to }&
\sum_{A \in \mathcal{A}(D,v)}{x_A} \ge 1, \text{ for all } v \in V(D) \cr
& x \ge 0.\nonumber
\end{align}
For a graph $G$, we define $\vec{\chi}_f(G):=\max_{\mathcal{O}(G) \text{ orient.}}{\vec{\chi}_f(\mathcal{O}(G))}$ to be its \emph{fractional dichromatic number}.
\end{itemize}
\end{definition}
The following inequality chain finally describes the behaviour of the
three notions of fractional digraph colouring numbers introduced so
far in general and shows that the star dichromatic number separates
the fractional from the circular chromatic number.
\begin{theorem}\label{3Inequ}
Let $D$ be a digraph. Then $\vec{\chi}_f(D) \leq \vec{\chi}^\ast(D) \leq \vec{\chi}_c(D)$.
\end{theorem}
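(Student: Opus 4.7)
The plan is to prove both inequalities at the level of the discrete $(k,d)$-colourings, using Proposition \ref{zshg} together with the analogous characterisation of $\vec{\chi}_c(D)$ from \cite{mastert} as the infimum of $k/d$ over all circular $(k,d)$-colourings of $D$.

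For $\vec{\chi}_f(D)\leq\vec{\chi}^\ast(D)$, I would turn any acyclic $(k,d)$-colouring $c$ of $D$ into an explicit feasible solution of the primal LP \eqref{primal}. Setting $B_i:=c^{-1}(A_i)$, Definition \ref{DefDir} gives $B_i\in\mathcal{A}(D)$ for every $i\in\mathbb{Z}_k$, and a direct double count shows that every vertex $v\in V(D)$ lies in exactly $d$ of the sets $B_0,\ldots,B_{k-1}$, namely those with $i\in\{c(v)-d+1,\ldots,c(v)\}\bmod k$. Assigning weight $1/d$ to each $B_i$ therefore produces a feasible fractional cover of total weight $k/d$, so $\vec{\chi}_f(D)\leq k/d$; taking the infimum over admissible $(k,d)$ yields the inequality.

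For $\vec{\chi}^\ast(D)\leq\vec{\chi}_c(D)$, I would prove the stronger pointwise statement that every circular $(k,d)$-colouring of $D$ is automatically an acyclic $(k,d)$-colouring. Let $c$ be a circular $(k,d)$-colouring and suppose for contradiction that some $c^{-1}(A_i)$ contains a directed cycle $u_0\to u_1\to\cdots\to u_{\ell-1}\to u_0$. Cyclically shifting all colours by $-i$ preserves both clauses of Definition \ref{DefCirc}, so I may assume $A_i=\{0,\ldots,d-1\}$, and thus each integer difference $v_j:=c(u_{j+1})-c(u_j)$ lies in $\{-(d-1),\ldots,d-1\}$. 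The circular condition requires $v_j\bmod k\in\{0\}\cup\{d,\ldots,k-1\}$, which is incompatible with $v_j\in\{1,\ldots,d-1\}$, and therefore forces $v_j\leq 0$ for every $j$. Summing around the cycle gives $\sum_j v_j=0$, so every $v_j=0$; but then the whole cycle sits inside a single colour class of $c$, contradicting the acyclicity of colour classes required by Definition \ref{DefCirc}.

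The only delicate step is the reduction from circular to acyclic $(k,d)$-colourings in the second inequality: once one observes that a strictly positive $v_j$ would place the residue $v_j\bmod k$ precisely in the forbidden gap $\{1,\ldots,d-1\}$, the telescoping around the cycle closes the argument with no further case analysis. The first inequality is a routine LP construction whose only subtlety is the double-counting of the cyclic intervals.
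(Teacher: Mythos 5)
Your proof is correct and follows essentially the same strategy as the paper, and both steps match. For the first inequality, the paper also builds a feasible solution of the primal LP from an acyclic $(k,d)$-colouring by assigning weight $1/d$ to each set $c^{-1}(A_i)$ (handling coinciding sets via a multiplicity count $i_A$, which your ``double count'' absorbs implicitly). For the second inequality, the paper proves the same pointwise containment but at the level of the continuous formulations, showing that any weak circular $p$-colouring is an acyclic $p$-colouring via exactly your argument: no arc of a directed cycle can move strictly forward inside a length-$1$ interval, so summing around the cycle forces all colours to coincide, contradicting the acyclicity of colour classes; your discrete $(k,d)$-level version via \cite{mastert} is an equally valid rendering of the same idea.
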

\begin{proof}
  Let $\vec{\chi}^\ast(D)=\frac{k}{d}$ and $c_k:V(D) \rightarrow
  \mathbb{Z}_k$ be an acyclic $(k,d)$-colouring for two integers $0<d\le
  k$. Given $A \in \mathcal{A}(D)$ let $i_A :=|\{i \in \mathbb{Z}_k
  \mid A= c_k^{-1}(\{i,...,i+d-1\})\}|$ and define
  $x_A=\frac{i_A}{d}$.  Then for every vertex $v \in V(D)$, we have
$$\sum_{A \in \mathcal{A}(D,v)}{x_A}=\sum_{i \in \mathbb{Z}_k:c_k(v) \in \{i,...,i+d-1\}}{\frac{1}{d}}=1$$
Hence, $x$ is feasible for the above linear programm implying
$\vec{\chi}_f(D)\le \sum_{i \in
  \mathbb{Z}_k}{\frac{1}{d}}=\frac{k}{d}= \vec{\chi}^\ast(D)$. 

For the second inequality, it suffices to show that for every $p \ge
1$, any weak circular $p$-colouring $c:V(D) \rightarrow [0,p)$ in the
sense of Bokal et al.\ is also an acyclic $p$-colouring of $D$.
Assume to the contrary there was a directed cycle $C$ in $D$ such
that $c(V(C))$ is contained in an open subinterval of length $1$ in
$S_p \simeq [0,p)$. We may assume $c(V(C)) \subseteq (0,1)_p$. Then
obviously, $0<(c(w)-c(u)) \text{ mod }p <1$ for every edge $(u,w) \in
E(C)$ with $c(w)>c(u)$, contradicting the definition of weak circular
colourings.  Thus $c(C)$ consists of a single point $\{t\} \subseteq
S_p$, which means that $c^{-1}(t)$ is not acyclic, a contradiction. Hence $c$ is a weak colouring
and the claim follows.
\end{proof}
It is well-known that for symmetric orientations of graphs  the chromatic number of the original graph equals their dichromatic number. Similar relations hold for fractional, star and circular dichromatic number.
\begin{remark}\label{remark:symmetric}
Let $G$ be an undirected graph, and denote by $S(G)$ its symmetric orientation where every undirected egde in $E(G)$ is replaced by an anti-parallel pair of arcs. Then 
$$\vec{\chi}_f(S(G))=\chi_f(G), \vec{\chi}^\ast(S(G))=\vec{\chi}_c(S(G))=\chi^\ast(G).$$
\end{remark}
\begin{proof}
  The first equality follows from the fact that the vertex subsets in
  $S(G)$ inducing acyclic subdigraphs are exactly the independent
  vertex sets in $G$. Furthermore, since every parallel replacement
  pair of arcs gives rise to a directed 2-cycle, weak circular
  $p$-colourings as well as $p$-colourings according to our definition
  of $S(G)$, for every $p \ge 1$, are exactly those maps $c:V(G)
  \rightarrow [0,p)$ with $\text{dist}_p(c(u),c(w)) \ge 1$ for every
  adjacent pair of vertices $u,w$, implying the latter
  two equalities.
\end{proof}
As we will see in the next section, when dealing with planar digraphs,
finding digraphs without large acyclic vertex subsets yields good
lower bounds for the fractional and thus also the star dichromatic
number. This is made precise by the following inequality.
\begin{lemma} \label{maxsize}
Let $D$ be a digraph and denote by $\vec{\alpha}(D)$ the maximum size of a vertex subset of $D$ inducing an acyclic subdigraph. Then $\vec{\chi}_f(D) \ge \frac{|V(D)|}{\vec{\alpha}(D)}$.
\end{lemma}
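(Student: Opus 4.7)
The plan is to give a short LP-based proof, in fact just a direct summation argument that avoids explicit invocation of duality. Let $x = (x_A)_{A \in \mathcal{A}(D)}$ be any feasible solution of the primal LP (\ref{primal}) whose optimal value is $\vec{\chi}_f(D)$. I want to bound $\sum_A x_A$ from below in terms of $|V(D)|$ and $\vec{\alpha}(D)$.

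First I would sum the covering constraints over all vertices: since $\sum_{A \in \mathcal{A}(D,v)} x_A \ge 1$ for each $v \in V(D)$, we get $|V(D)| \le \sum_{v \in V(D)} \sum_{A \ni v} x_A = \sum_{A \in \mathcal{A}(D)} |A|\, x_A$, where the double-counting exchange is just switching the order of summation. Next, using $|A| \le \vec{\alpha}(D)$ for every $A \in \mathcal{A}(D)$ by definition of $\vec{\alpha}$, this is at most $\vec{\alpha}(D) \sum_A x_A$. Dividing by $\vec{\alpha}(D)$ yields $\sum_A x_A \ge \frac{|V(D)|}{\vec{\alpha}(D)}$, and taking the infimum over feasible $x$ gives the desired inequality.

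There is essentially no obstacle here; the only thing to be slightly careful about is that $\mathcal{A}(D)$ is a finite collection of finite sets, so all the sums are well defined and the swap of summation order is legitimate. The same argument can be phrased as exhibiting the uniform feasible dual vector $y_v := 1/\vec{\alpha}(D)$, which certifies the same bound via LP duality, but the direct averaging is shorter and self-contained.
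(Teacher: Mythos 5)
Your proof is correct and is essentially the paper's proof with the weak LP duality step unrolled into a direct double-counting computation: the paper exhibits the uniform dual vector $y_v = 1/\vec{\alpha}(D)$ and invokes duality, while your summation $|V(D)| \le \sum_A |A|\,x_A \le \vec{\alpha}(D)\sum_A x_A$ is exactly what weak duality against that $y$ says, as you yourself note at the end.
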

\begin{proof}
Consider the dual of the linear program \eqref{primal},
\begin{align}\label{dual}
&\max \sum_{v \in V}{y_v} \\
\text{ subj. to } &
\sum_{v \in A}{y_v} \leq 1, & \text{ for all } A \in \mathcal{A}(D) \nonumber\\
&y \ge 0.\nonumber
\end{align}
Define $y_v:=\frac{1}{\vec{\alpha}(D)}$ for each vertex $v \in V$. The $y$ clearly is feasible for \eqref{dual} and the result follows by linear programming duality. 
\end{proof}
We now finally present a construction of digraphs (which are part of
the class of so-called \emph{circulant digraphs}) whose star
dichromatic numbers attain every rational number $q \ge 1$. The same
digraphs were used in \cite{bokal}.
\begin{theorem}
Let $(k,d) \in \mathbb{N}^2$ with $k \ge d$. Denote by $\vec{C}(k,d)$ the digraph defined over the vertex set $V(\vec{C}(k,d)):=\{0,...,k-1\} \simeq \mathbb{Z}_k$ so that each vertex $i \in \mathbb{Z}_k$ has exactly $k-d$ outgoing arcs, namely $(i,j), j=i+d,i+d+1,...,i+k-1$. Then
$$\vec{\chi}_f(D) = \vec{\chi}^\ast(D) = \vec{\chi}_c(D)= \frac{k}{d}.$$
Therefore, $\vec{\chi}^\ast(D)$ attains every rational number $q \geq 1$.
\end{theorem}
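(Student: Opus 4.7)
The plan is to exploit the chain $\vec{\chi}_f(D) \leq \vec{\chi}^\ast(D) \leq \vec{\chi}_c(D)$ from \cref{3Inequ} and squeeze all three quantities between a matching upper bound on $\vec{\chi}_c(\vec{C}(k,d))$ and a matching lower bound on $\vec{\chi}_f(\vec{C}(k,d))$, each equal to $\tfrac{k}{d}$. The final assertion is then immediate: for every rational $q=\tfrac{k}{d}\geq 1$ the digraph $\vec{C}(k,d)$ realises $\vec{\chi}^\ast = q$ (for $q=1$ take $k=d$, giving the arcless digraph on $d$ vertices).

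For the upper bound I would check that the identity map $c\colon V(\vec{C}(k,d))\to\mathbb{Z}_k,\ i\mapsto i$, is a circular $(k,d)$-colouring in the sense of \cref{DefCirc}: by definition of the arc set every arc $(i,j)$ satisfies $(j-i)\bmod k\in\{d,\dots,k-1\}$, and each colour class is a singleton, hence trivially acyclic. This yields $\vec{\chi}_c(\vec{C}(k,d))\leq \tfrac{k}{d}$.

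For the lower bound I would invoke \cref{maxsize}, which reduces the task to showing $\vec{\alpha}(\vec{C}(k,d))\leq d$. Let $A\subseteq\mathbb{Z}_k$ induce an acyclic subdigraph and fix a topological ordering $b_1,\dots,b_m$ of $A$. The sink $b_m$ has no out-neighbour in $A$, so for every $i<m$ we must have $(b_i-b_m)\bmod k\notin\{d,\dots,k-1\}$; combined with $b_i\neq b_m$, this forces $(b_i-b_m)\bmod k\in\{1,\dots,d-1\}$. Hence $A$ is contained in the cyclic interval $\{b_m,b_m+1,\dots,b_m+d-1\}$ of exactly $d$ vertices, so $|A|\leq d$. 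Applying \cref{maxsize} then gives $\vec{\chi}_f(\vec{C}(k,d))\geq \tfrac{k}{\vec{\alpha}(\vec{C}(k,d))}\geq \tfrac{k}{d}$, closing the sandwich.

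The main obstacle is the bound $\vec{\alpha}(\vec{C}(k,d))\leq d$; once established, everything else is either a direct appeal to an earlier result or a one-line verification. The key insight for that bound is to look at the last vertex of a topological order: in a circulant whose connection set is a single arc $\{d,\dots,k-1\}$, having no forward neighbour inside $A$ instantly localises all of $A$ in a short cyclic interval, which makes what could have been a delicate combinatorial argument collapse to a couple of lines.
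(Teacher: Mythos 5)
Your proposal is correct and follows essentially the same strategy as the paper: squeeze via the inequality chain of \cref{3Inequ}, obtain the lower bound from $\vec{\alpha}(\vec{C}(k,d))\le d$ via a sink argument and \cref{maxsize}, and obtain the upper bound from an explicit colouring. The only cosmetic difference is that you exhibit the identity map as a circular $(k,d)$-colouring in the sense of \cref{DefCirc}, whereas the paper writes the equivalent assignment $i\mapsto i/d$ as a strong $\tfrac{k}{d}$-colouring; these are the same colouring up to the $(k,d)\leftrightarrow p$ translation.
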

\begin{proof}

  According to Theorem \ref{3Inequ} it suffices to show that
  $\frac{k}{d} \leq \vec{\chi}_f(\vec{C}(k,d))$ and
  $\vec{\chi}_c(\vec{C}(k,d)) \leq \frac{k}{d}$.  

Let $A \in
  \mathcal{A}(\vec{C}(k,d))$, then $\vec{C}(k,d)[A]$, being acyclic
  contains a sink $a \in A \subseteq \mathbb{Z}_k$ and therefore
  $A \cap \{a+d,...,a+k-1\}= \emptyset$, proving $|A| \le d$ and the first inequality follows using Lemma \ref{maxsize}.

  For the  other  inequality,  note that  $c_{k,d}(i):=\frac{i}{d} \in
  [0,\frac{k}{d})$  for all $i  \in  V(\vec{C}(k,d))$ defines a strong
  $\frac{k}{d}$-colouring $c_{k,d}$ of $D$.

\end{proof}
Putting $k=n, d=n-1$ in the above, we immediately get the following.
\begin{corollary} \label{circles}
For every $n \in \mathbb{N}$,
$$\vec{\chi}_f(\vec{C}_n)=\vec{\chi}^\ast(\vec{C}_n)=\vec{\chi}_c(\vec{C}_n)=\frac{n}{n-1}.$$
\end{corollary}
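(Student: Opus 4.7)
The plan is to obtain the statement as a direct specialization of the preceding theorem, applied with $k = n$ and $d = n-1$. The only substantive step beyond invoking that result will be identifying the circulant digraph $\vec{C}(n, n-1)$ with the directed $n$-cycle $\vec{C}_n$.

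To make this identification, I would unpack the definition of $\vec{C}(k,d)$ at these parameters: each vertex $i \in \mathbb{Z}_n$ has exactly $k-d = 1$ outgoing arc, namely $(i, i + (n-1))$, and since $i + (n-1) \equiv i - 1 \pmod{n}$, every vertex has in- and out-degree one. The resulting arc set forms a single directed Hamiltonian cycle on $\mathbb{Z}_n$, so $\vec{C}(n,n-1)$ is, up to relabelling, the directed cycle $\vec{C}_n$.

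Applying the preceding theorem then immediately yields $\vec{\chi}_f(\vec{C}_n) = \vec{\chi}^\ast(\vec{C}_n) = \vec{\chi}_c(\vec{C}_n) = n/(n-1)$. There is essentially no obstacle here; the entire content of the corollary is packaged into the identification step above. As a sanity check, one can observe concretely that the lower bound $\vec{\chi}_f(\vec{C}_n) \geq n/(n-1)$ follows from $\vec{\alpha}(\vec{C}_n) = n-1$ via \Cref{maxsize} (removing any single vertex leaves a directed path, which is acyclic), while the matching upper bound $\vec{\chi}_c(\vec{C}_n) \leq n/(n-1)$ is witnessed by the strong circular colouring $i \mapsto i/(n-1)$ used in the theorem's proof.
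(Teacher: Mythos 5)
Your proof is correct and matches the paper's own reasoning exactly: the corollary is obtained by specializing the preceding theorem with $k=n$, $d=n-1$, after identifying $\vec{C}(n,n-1)$ with the directed $n$-cycle. The identification and the sanity check you include are sound and harmless elaborations of what the paper leaves implicit.
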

While the above provides examples for digraphs where the three different concepts of fractional digraph colouring coincide, we now focus on constructing examples of digraphs where the numbers vary significantly in order to point out differences of the approaches. \n
First of all, it is well-known that contrary to the star chromatic number, the fractional chromatic number of a graph does not fulfil a ceiling-property, but can be arbitrarily far apart from the chromatic number of the graph. As a consequence we conclude that circular and star dichromatic number can be arbitrarily far apart of the fractional dichromatic number:
\begin{theorem}
  For every $C \in \mathbb{R}_+$, there is a digraph $D$ with
  $\vec{\chi}^\ast(D)-\vec{\chi}_f(D) =
  \vec{\chi}_c(D)-\vec{\chi}_f(D) \ge C$.
\end{theorem}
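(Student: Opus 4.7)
The plan is to reduce the claim to the undirected setting via symmetric orientations and then invoke Kneser graphs. By Remark~\ref{remark:symmetric}, if we take $D = S(G)$ for an arbitrary graph $G$, then $\vec{\chi}^\ast(D) = \vec{\chi}_c(D) = \chi^\ast(G)$ while $\vec{\chi}_f(D) = \chi_f(G)$. Hence the equality $\vec{\chi}^\ast(D) - \vec{\chi}_f(D) = \vec{\chi}_c(D) - \vec{\chi}_f(D)$ is automatic, and it suffices to produce, for each given $C \in \mathbb{R}_+$, an undirected graph $G$ with $\chi^\ast(G) - \chi_f(G) \ge C$.

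To achieve this I will use the strict ceiling inequality $\chi^\ast(G) > \chi(G) - 1$ from Theorem~\ref{FracPropUndir}(ii), which further reduces the task to exhibiting graphs with arbitrarily large gap between the ordinary and the fractional chromatic number. The classical example is the family of Kneser graphs $KG_{n,k}$: by Lov\'asz's theorem one has $\chi(KG_{n,k}) = n - 2k + 2$, whereas $\chi_f(KG_{n,k}) = n/k$. Fixing $k \ge 2$ and letting $n$ grow, the difference $\chi(KG_{n,k}) - \chi_f(KG_{n,k}) = n(1 - 1/k) - 2k + 2$ tends to infinity, so for each $C$ we may choose $n$ large enough (taking e.g.\ $k = 2$ makes the bookkeeping trivial) so that $\chi(KG_{n,k}) - 1 - \chi_f(KG_{n,k}) \ge C$, and then set $D := S(KG_{n,k})$.

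I do not anticipate a genuine obstacle here: the argument is essentially an assembly of Remark~\ref{remark:symmetric}, the strict ceiling property for $\chi^\ast$, and the well-known evaluations of $\chi$ and $\chi_f$ on Kneser graphs. The only care required is to spell out a quantitative choice of $n$ as a function of $C$ (and possibly to cite \cite{kneser} and a standard reference for $\chi_f(KG_{n,k}) = n/k$). If one wished to avoid Lov\'asz's theorem, one could instead invoke the Erd\H{o}s--Lov\'asz construction of triangle-free graphs with large chromatic number combined with the independence-ratio bound on $\chi_f$, but Kneser graphs give the cleanest explicit family.
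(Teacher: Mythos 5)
Your proposal is correct and follows essentially the same approach as the paper: both reduce to the undirected setting via Remark~\ref{remark:symmetric} and then cite the unbounded gap $\chi(K(n,2)) - \chi_f(K(n,2)) = \tfrac{n}{2} - 2 \to \infty$ for the Kneser graphs. You are slightly more careful in explicitly invoking the strict ceiling inequality $\chi^\ast(G) > \chi(G)-1$ from Theorem~\ref{FracPropUndir}(ii) to convert the gap between $\chi$ and $\chi_f$ into the required gap between $\chi^\ast$ and $\chi_f$, a step the paper leaves implicit.
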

\begin{proof}
  By Remark~\ref{remark:symmetric} the result follows from the same
  observation for undirected graphs. As is well known for the Kneser
  graphs $G_n:=K(n,2), n \ge 4$, we have $\chi(G_n)-\chi
  _f(G_n)=(n-2)-\frac{n}{2}=\frac{n}{2}-2 \rightarrow \infty$ (cf.\
  \cite{kneser}, page 32).
\end{proof}

Now we compare $\vec{\chi}^\ast$ and $\vec{\chi}_c$ in more detail. We
see the main advantage of our new parameter in the fact, that it is
sufficient to consider only the strong components of a digraph $D$ in
order to compute $\vec{\chi}^\ast(D)$. 
\begin{observation} \label{CutVerbinden}
Let $D$ be a digraph and $S=D(X,\overline{X})$ a directed cut. Let $D_1:=D[X], D_2:=D[\overline{X}]$. Then 
$$\vec{\chi}(D)=\max\{\vec{\chi}(D_1),\vec{\chi}(D_2)\}, \vec{\chi}^\ast(D)=\max\{\vec{\chi}^\ast(D_1),\vec{\chi}^\ast(D_2)\}.$$
\end{observation}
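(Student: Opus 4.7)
The plan is to exploit the defining property of a directed cut: since all edges of $S = D(X,\overline{X})$ go from $X$ to $\overline{X}$ (say), no directed cycle in $D$ can traverse $S$ at all. Hence every directed cycle of $D$ is contained in either $D_1$ or $D_2$. Both equalities will then follow from a straightforward ``combine colourings independently'' argument, with the classical case of $\vec{\chi}$ handled exactly like the fractional case by taking a common integer palette.

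The inequality $\vec{\chi}^\ast(D) \ge \max\{\vec{\chi}^\ast(D_1),\vec{\chi}^\ast(D_2)\}$ is immediate: given any acyclic $p$-colouring $c$ of $D$, its restrictions $c|_X$ and $c|_{\overline{X}}$ are acyclic $p$-colourings of $D_1$ and $D_2$ respectively, since cycles in the induced subdigraphs are cycles in $D$. The same reasoning applied to $(k,1)$-colourings yields $\vec{\chi}(D) \ge \max\{\vec{\chi}(D_1),\vec{\chi}(D_2)\}$.

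For the reverse inequality, set $p := \max\{\vec{\chi}^\ast(D_1),\vec{\chi}^\ast(D_2)\}$. By \Cref{closed} each $D_i$ admits an acyclic $\vec{\chi}^\ast(D_i)$-colouring. Observe that acyclic $q$-colourings scale monotonically upwards: if $c_i : V(D_i) \to [0, \vec{\chi}^\ast(D_i))$ is acyclic, then $\tilde c_i := \frac{p}{\vec{\chi}^\ast(D_i)} c_i$ is an acyclic $p$-colouring of $D_i$, since a directed cycle landing in an open interval of length $1$ in $[0,p)$ would land, after rescaling, in an open interval of length $\vec{\chi}^\ast(D_i)/p \le 1$ in $[0,\vec{\chi}^\ast(D_i))$, contradicting the acyclicity of $c_i$. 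Now define $c : V(D) \to [0,p)$ by $c(v) := \tilde c_1(v)$ for $v \in X$ and $c(v) := \tilde c_2(v)$ for $v \in \overline X$. Any directed cycle $C$ of $D$ lies entirely in $D_1$ or entirely in $D_2$ by the opening paragraph, hence its image under $c$ coincides with its image under the corresponding $\tilde c_i$; therefore $c(V(C))$ cannot be contained in an open interval of length $1$, showing that $c$ is an acyclic $p$-colouring of $D$. Thus $\vec{\chi}^\ast(D) \le p$.

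The analogous construction for $\vec{\chi}$ uses proper dichromatic colourings of $D_1$ and $D_2$ with a common palette of size $k = \max\{\vec{\chi}(D_1),\vec{\chi}(D_2)\}$, glued along the bipartition $(X,\overline X)$; no cycle of $D$ is monochromatic since every cycle lives inside one side. The only subtle point is the monotone rescaling step for $\vec{\chi}^\ast$, but the estimate $\vec{\chi}^\ast(D_i)/p \le 1$ makes it routine. I expect the heart of the argument, and the only place the hypothesis ``directed cut'' is essentially used, to be the observation that no directed cycle can cross $S$.
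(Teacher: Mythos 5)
Your proof is correct. The paper states this as an Observation without proof, regarding it as immediate; your argument --- every directed cycle lies entirely on one side of a directed cut, so acyclic colourings of $D_1$ and $D_2$ can be rescaled to a common circle $S_p$ with $p=\max\{\vec{\chi}^\ast(D_1),\vec{\chi}^\ast(D_2)\}$ and glued, with the analogous palette-merging for $\vec{\chi}$ --- is precisely the intended one.
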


On the other hand for the circular dichromatic number the existence of
a dominating source completely destroys any extra information we hope to 
gain compared to the dichromatic number.
\begin{proposition}\label{prop:dominating}
  Let $D$ be a digraph. We denote by $D^s$ the digraph arising from
  $D$ by adding an extra vertex $s$, which is a source adjacent to
  every vertex in $V(D)$. Then $\vec{\chi}_c(D^s)=\vec{\chi}(D)$.
\end{proposition}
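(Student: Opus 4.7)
The plan is to establish both inequalities. The upper bound $\vec{\chi}_c(D^s) \leq \vec{\chi}(D)$ is immediate: since $s$ has in-degree zero in $D^s$, any dicolouring of $D$ extends to one of $D^s$ by assigning $s$ an arbitrary colour, so $\vec{\chi}(D^s) = \vec{\chi}(D)$, and Theorem~\ref{PropertiesCircularDi}(ii) then yields $\vec{\chi}_c(D^s) \leq \vec{\chi}(D^s) = \vec{\chi}(D)$.

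For the reverse inequality I would argue by contradiction. Set $k := \vec{\chi}(D)$ and assume $p := \vec{\chi}_c(D^s) < k$, so that $p \in (k-1, k)$. By Proposition~\ref{circdic}, $D^s$ admits a weak $p$-colouring $c$; after a rotation of $S_p$ we may assume $c(s) = 0$. Because every $v \in V(D)$ is an out-neighbour of $s$, the weak-colouring condition applied to $(s, v)$ forces $c(v) \in \{0\} \cup [1, p)$. The strategy is then to extract from $c$ a $(k-1)$-dicolouring of $D$, contradicting $\vec{\chi}(D) = k$. To do so I would partition $\{0\} \cup [1, p)$ into the $k-1$ arcs $A_0 := \{0\} \cup [k-1, p)$ (a wrap-around arc of length $p - k + 1 < 1$) and $A_i := [i, i+1)$ of length exactly $1$ for $i = 1, \dots, k-2$, and show that each $c^{-1}(A_i)$ induces an acyclic subdigraph of $D$.

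The substance of the argument is this acyclicity claim. Parametrize each $A_i$ linearly along the clockwise direction of $S_p$. For any edge $(u, w)$ of $D$ with both endpoints in $c^{-1}(A_i)$, the weak condition gives either $c(u) = c(w)$ or $(c(w) - c(u)) \bmod p \geq 1$; but a "forward" edge inside $A_i$ has clockwise distance strictly less than $|A_i| \leq 1$ and is therefore excluded. Every edge inside $c^{-1}(A_i)$ is consequently either monochromatic or "backward" in the linear parameter, so any directed cycle in $c^{-1}(A_i)$ traverses only non-increasing parameter values and, returning to its start, forces all its vertices to share a single colour. This places the cycle inside a single colour class $c^{-1}(t)$, contradicting the acyclicity of colour classes required by the definition of a weak $p$-colouring.

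I expect the main obstacle to be identifying the right partition: merging the singleton colour $\{0\}$ (pinned by $s$) with the leftover interval $[k-1, p)$ into a single short class $A_0$ is precisely what allows $V(D)$ to be covered by $k-1$ arcs of length at most $1$; the forward/backward dichotomy on each short arc then does the remaining work of converting the weak circular constraint into genuine acyclicity of each preimage. The $k = 2$ boundary case is included automatically, since then only the short arc $A_0$ survives and the acyclicity claim directly contradicts $\vec{\chi}(D) \geq 2$.
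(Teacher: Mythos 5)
Your proof is correct and follows essentially the same route as the paper's: partition the portion of $S_p$ available to $V(D)$ into $k-1$ short arcs, observe that inside each arc every edge must point backward, and conclude that the arc preimages give a $(k-1)$-dicolouring of $D$, a contradiction. The only substantive difference is that you work with a weak circular $p$-colouring, so you must fold the point $0$ into the wrap-around arc and invoke acyclicity of the colour classes to dispose of monochromatic edges, whereas the paper takes a strong circular $p$-colouring, under which $c(V(D)) \subseteq [1,p)$ outright and every in-arc edge is strictly backward, making the contradiction a shade more immediate.
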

\begin{proof}
  By Observation \ref{CutVerbinden} we have $\vec{\chi}_c(D^s) \leq
  \vec{\chi}(D^s)=\vec{\chi}(D)$. Assume contrary to the assertion
  that there was a strong $p$-colouring $c$ of $D^s$ with
  $p<\vec{\chi}(D)=:k$. We may assume $c(s)=0$. According to the
  definition of a strong colouring, the interval $[0,1)_p$ does not
  contain any other vertices, hence $c(V(D)) \subseteq [1,p)_p$. Since
  $p-1<k-1$, we can decompose the interval $[1,p)_p$ into $k-1$
  pairwise disjoint cyclic subintervals $I_1,...,I_{k-1}$ of $S_p$,
  each of length less than one and covering all the finitely many
  colouring points. If $(u,w)$ is an edge such that $c(u),c(w) \in I_l$
  are contained in the same interval, then, since $c$ is a strong
  colouring, we must have $c(u)>c(w)$. Hence, each $c^{-1}(I_l)$ induces an
  acyclic subdigraph of $D$ for each $l$, all together defining a
  $(k-1)$-digraph colouring of $D$, contradiction.  This proves the
  claim.
\end{proof}

\begin{example} \label{ex}
$\vec{\chi}^\ast(\vec{C}_n^s)=\frac{n}{n-1}<2=\vec{\chi}_c(\vec{C}_n^s)$ for all $n \ge 3$.
\end{example}
\begin{proof}

  According to Observation \ref{CutVerbinden} and Theorem
  \ref{circles} we have $\vec{\chi}^\ast(\vec{C}_n^s) =
  \vec{\chi}^\ast(\vec{C}_n)=\frac{n}{n-1}$. The remaining equality follows
  immediately from Proposition~\ref{prop:dominating}.
\end{proof}
\begin{figure}
\centering
\includegraphics[scale=1]{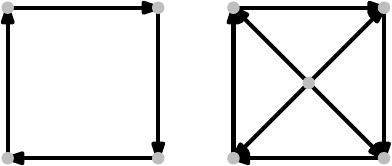}
\caption{Stacking a source into a directed 4-cycle. While the star dichromatic number remains unchanged with value $\frac{4}{3}$, the circular dichromatic number jumps from $\vec{\chi}_c(\vec{C}_4)=\frac{4}{3}$ to $\vec{\chi}_c(\vec{C}_4^s)=2$.} \label{DQ}
\end{figure}

\begin{corollary}
For every $\varepsilon>0$ there is a digraph $D$ with $\vec{\chi}_c(D)-\vec{\chi}^\ast(D) \ge 1-\varepsilon$.
\end{corollary}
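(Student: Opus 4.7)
The plan is to simply apply \cref{ex} with a sufficiently large value of $n$, so the corollary follows as an immediate quantitative consequence of the preceding example rather than requiring any new construction.

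More concretely, for the digraph $D = \vec{C}_n^s$ (a directed $n$-cycle with a dominating source stacked on top), Example \ref{ex} gives
\[
\vec{\chi}_c(\vec{C}_n^s) - \vec{\chi}^\ast(\vec{C}_n^s) = 2 - \frac{n}{n-1} = \frac{n-2}{n-1}.
\]
I would then observe that $\frac{n-2}{n-1} = 1 - \frac{1}{n-1} \to 1$ as $n \to \infty$, and choose any integer $n \ge 1 + \lceil 1/\varepsilon \rceil$ (and $n \ge 3$) so that $\frac{1}{n-1} \le \varepsilon$, giving $\vec{\chi}_c(D) - \vec{\chi}^\ast(D) \ge 1-\varepsilon$.

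There is essentially no obstacle here: the whole content has been packaged into Example \ref{ex}, whose proof already invokes \cref{CutVerbinden}, \cref{circles} and \cref{prop:dominating}. The only thing to check is the elementary inequality $\frac{n-2}{n-1} \ge 1-\varepsilon$, which is a one-line computation. The corollary therefore amounts to explicitly exhibiting the family $\{\vec{C}_n^s\}_{n\ge 3}$ as a witness for arbitrarily large gaps approaching $1$.
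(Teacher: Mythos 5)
Your proof is correct and is exactly the argument the paper intends: the corollary is stated immediately after \cref{ex}, and the gap $\vec{\chi}_c(\vec{C}_n^s)-\vec{\chi}^\ast(\vec{C}_n^s)=2-\frac{n}{n-1}=1-\frac{1}{n-1}$ tends to $1$ as $n\to\infty$. Choosing $n$ large enough so that $\frac{1}{n-1}\le\varepsilon$ is precisely the one-line computation required.
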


\section{The Star Dichromatic Number of Simple Planar Digraphs and Circular Vertex Arboricity} \label{va}
Let $G$ be a given (unoriented) graph. If we want to estimate $\vec{\chi}^\ast(G)$, we need to find $(k,d)$-digraph colourings for every possible orientation of $G$. The simplest way of doing this is to find a single colouring of $V(G)$ yielding a legal $(k,d)$-colouring on all the possible orientations at the same time. This leads to the following definition which is introduced in \cite{fracvert}
\begin{definition}[\cite{fracvert}] \label{DefVa}
Let $G$ be a graph and $(k,d) \in \mathbb{N}^2, k \ge d$. A $(k,d)$-tree-colouring of $G$ is a colouring $c:V(G) \rightarrow \mathbb{Z}_k \simeq \{0,...,k-1\}$ of the vertices so that with $A_i:=\{i,i+1,...,i+d-1\} \subseteq \mathbb{Z}_k,$ $c^{-1}(A_i)$ induces an acyclic subgraph of $G$ for all $i \in  \mathbb{Z}_k$.
\end{definition}
The authors of \cite{fracvert} now define the {\em circular vertex arboricity} of a graph $G$ as the minimal value
$$\text{va}^\ast(G):=\inf\left\{\frac{k}{d}\bigg\vert \exists (k,d)\text{-colouring of }G\right\}.$$
The above now immediately implies
\begin{remark} \label{rem}
For every graph $G$, $\vec{\chi}^\ast(G) \leq \text{va}^\ast(G)$.
\end{remark}
As in the previous chapter, they also proved an alternative representation of this fractional quantity in terms of real numbers:
\begin{definition}[\cite{fracvert}]
Let $G$ be a graph and $p \ge 1$. Then a {\em $p$-circular tree colouring} of $G$ is defined as an assignment $c:V(G) \rightarrow S_p \simeq [0,p)$ so that for every open interval $I=(a,b)_p \subseteq [0,p)$ of length $1$
, $c^{-1}(I)$ induces an acyclic subgraph of $G$.
\end{definition}
\begin{theorem}[\cite{fracvert}]
For every graph $G$ we have
$$\text{va}^\ast(G)=\inf\{p|\exists p\text{-circular tree colouring of }G\}.$$
\end{theorem}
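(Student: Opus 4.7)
The plan is to adapt the compactness argument of Proposition~\ref{closed} and the discrete--continuous equivalence of Proposition~\ref{zshg} to the undirected setting, replacing throughout ``directed cycle in an induced subdigraph'' by ``cycle in an induced subgraph''. Set $P_G := \{p \ge 1 \mid G \text{ admits a } p\text{-circular tree colouring}\}$. I will first show that $P_G$ is closed, and then that for every $p \ge 1$, $G$ admits a $p$-circular tree colouring if and only if $G$ admits a $(k,d)$-tree-colouring for every $(k,d) \in \mathbb{N}^2$ with $k/d \ge p$. Combined, these give $\text{va}^\ast(G) = \inf P_G$, which is exactly the claim.

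For the closedness of $P_G$, I would take a sequence $(p_n)$ in $P_G$ converging to $p \ge 1$, assume $p_n > p$, and rescale each $p_n$-circular tree colouring $c_n'$ by the factor $p/p_n$ to obtain $c_n : V(G) \to [0,p)$ with the property that the preimage of every open interval of length at most $p/p_n$ induces a forest. Viewing $(c_n)$ as a sequence in the compact space $S_p^{|V(G)|}$ and extracting a convergent subsequence by Heine--Borel, let $c$ denote its limit. If some cycle $C$ of $G$ had $c(V(C))$ inside an open interval $(a,b)_p$ of length $1$, then finiteness of $V(C)$ would give $0 < \varepsilon < 1/2$ with $c(V(C)) \subseteq (a+\varepsilon, b-\varepsilon)_p$; choosing $N$ so large that $c_N$ is within $\varepsilon/2$ of $c$ coordinatewise and $p_N < p/(1-\varepsilon)$, the vertices $V(C)$ would then lie in an open interval of length $1 - \varepsilon < p/p_N$ under $c_N$, contradicting the defining property of $c_N$. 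Hence $c$ is a $p$-circular tree colouring and $p \in P_G$.

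For the two directions of the $(k,d)$-equivalence, the forward direction is immediate: given a $p$-circular tree colouring $c$ and any $(k,d)$ with $k/d \ge p$, the discretization $c_{k,d}(x) := \lfloor (k/p) c(x) \rfloor$ is a $(k,d)$-tree-colouring, since a cycle in $c_{k,d}^{-1}(A_i)$ would force, by the same modular computation as in Proposition~\ref{zshg}, $c(V(C)) \subseteq (ip/k, ip/k + 1)_p$, contradicting the hypothesis on $c$. Conversely, if $(k,d)$-tree-colourings exist for every $(k,d)$ with $k/d \ge p$, I would pick a sequence $(k_n, d_n)$ with $p_n := k_n/d_n \to p$; rescaling a $(k_n,d_n)$-tree-colouring $c_n$ by $x \mapsto (p_n/k_n) c_n(x)$ yields a $p_n$-circular tree colouring by the identical congruence argument of Proposition~\ref{zshg}, so $p_n \in P_G$, and closedness of $P_G$ gives $p = \lim_n p_n \in P_G$.

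The main obstacle is the closedness step, since it combines compactness on $S_p^{|V(G)|}$ with the $\varepsilon$-push-in trick needed to convert an open-interval obstruction for the limit $c$ into a strictly-shorter open-interval obstruction for some approximating $c_N$. Once closedness is established, the remaining $(k,d)$-equivalence is essentially the computation of Proposition~\ref{zshg} transposed verbatim to the undirected setting, with ``induced subdigraph is acyclic'' replaced by ``induced subgraph is acyclic''; no new ideas are required.
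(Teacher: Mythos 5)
The paper does not prove this result --- it is quoted as a theorem of \cite{fracvert} --- so there is no in-paper proof to compare against; the relevant arguments the paper does supply are Propositions~\ref{closed} and~\ref{zshg}, which are precisely the directed analogues you invoke. Your transposition of those two proofs is correct: neither uses any feature of directed cycles beyond the finiteness of the cycle's vertex set, so the Heine--Borel compactness step, the $\varepsilon$-push-in, the rescaling, and the modular arithmetic in both discretization directions carry over verbatim once ``acyclic subdigraph'' is replaced by ``forest''. One small bookkeeping point worth making explicit in the write-up: to pass from closedness of $P_G$ and the full $(k,d)$-equivalence to the bare equality $\text{va}^\ast(G)=\inf P_G$, the cleanest route is to isolate the single-pair observation that a $(k,d)$-tree-colouring rescales via $x\mapsto c(x)/d$ to a $(k/d)$-circular tree colouring (the reverse-direction computation of Proposition~\ref{zshg} applied to a constant sequence), which yields $\inf P_G\le\text{va}^\ast(G)$, while the forward discretization yields $\text{va}^\ast(G)\le\inf P_G$; the full equivalence and closedness then additionally show the infimum is attained, mirroring the implicit structure of the paper's own Proposition~\ref{zshg}.
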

An important conjecture related to colourings of digraphs is the 2-colour-conjecture by Victor-Neumann-Lara:
\begin{conjecture}[Neumann-Lara, 1985]
$\vec{\chi}(D) \leq 2$ for every simple planar digraph $D$.
\end{conjecture}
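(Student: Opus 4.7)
The statement in question is the 1985 Neumann-Lara 2-colour conjecture, which, to my knowledge, remains open in its full generality after more than three decades of attention. So rather than propose a proof that would resolve it, I will sketch the line of attack I would pursue and flag where I expect to be stopped.

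The plan is a minimum counterexample / discharging strategy in the spirit of the proof of the Four Colour Theorem. First I would let $D$ be a simple planar digraph with $\vec{\chi}(D)\ge 3$ minimising $|V(D)|+|E(D)|$, and record the usual reductions: $D$ is 2-strong (otherwise apply \Cref{CutVerbinden} to the strong components across a directed cut, or a similar splitting across a 1-cut of the underlying graph), and every proper subdigraph obtained by deleting or contracting admits a partition into two induced acyclic subdigraphs. Second, I would use Euler's formula on the underlying planar graph — noting that simplicity forbids multi-edges only up to digons, so the underlying graph has at most $2|V|-4$ edges when triangle-free and $3|V|-6$ in general — to force low-degree vertices and short cycles bounding small faces. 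The star/circular framework of the paper enters here as an auxiliary tool: showing that a candidate configuration $H$ can be extended to a $2$-colouring of $D$ becomes showing that an acyclic $p$-colouring of $D-H$ with $p$ slightly below $2$ can be pushed across $H$, which is often easier to verify locally than a strict $2$-colouring.

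Third, I would try to assemble an unavoidable set of reducible configurations. The natural candidates are: a vertex of in- or out-degree at most $1$; a digon $uv$ whose two endpoints have total degree small in $D-\{u,v\}$; and small face configurations where the cyclic pattern of arcs around the face forces an extendable partial colouring. For each such configuration, reducibility is checked by a finite case analysis over the possible restrictions of an acyclic $2$-partition of $D-H$ to $\partial H$, together with all orientations of the arcs incident with $H$.

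The hard part — and the reason this has resisted attack — is that orientations introduce an enormous amount of extra case-work at exactly the step where the undirected theory is cleanest. A configuration that is reducible under one orientation of its boundary arcs may fail for a reversed orientation, digons act as forced 2-cycles that destroy the simple ``colour differently'' reductions, and the symmetry $D\mapsto D^{\mathrm{op}}$ only cuts the case-work in half. Realistically, what I would expect to get out of this attack is not the conjecture itself but either a proof that $\vec{\chi}^\ast(D)\le 2+\varepsilon(n)$ for planar $D$ on $n$ vertices, or a confirmation of the conjecture under auxiliary hypotheses such as large directed girth, absence of digons, or bounded maximum degree — subclasses where the reducibility arguments above actually go through.
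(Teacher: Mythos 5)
You correctly identified that this is Neumann-Lara's two-colour conjecture, which remains open, and that no proof should be offered. The paper likewise presents it only as a conjecture and gives no proof, so there is nothing to compare your sketch against; declining to fabricate an argument is exactly the right call. Your outline of a discharging attack and the observation that orientation-dependent case explosion is the obstruction are reasonable commentary, and your closing remark about weaker results under digirth or degeneracy hypotheses is in fact consistent with what the paper does prove (e.g.\ $\vec{\chi}^\ast(D)\le 2.5$ for simple planar $D$ via circular vertex arboricity, and $\vec{\chi}_f(D)\le \frac{g}{g-5}$ for digirth $g\ge 6$ via Lucchesi--Younger). One small correction: a minimum counterexample to the conjecture cannot be reduced by \Cref{CutVerbinden} across a \emph{directed} cut of $D$ itself unless such a cut exists, i.e.\ unless $D$ is not strongly connected; for a $2$-strong underlying graph you would still need a separate argument to handle $1$-separations of the underlying graph, since those are not in general directed cuts of $D$.
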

According to the above, this is equivalent to $\vec{\chi}^\ast(D) \leq 2$ for simple planar digraphs. While the conjecture still remains unproven and since the best known general result so far only guarantees the existence of $3$-colourings of simple planar digraphs (via vertex arboricity,  \cite{chartrand2}), the following can be seen as an improvement of the upper bound $3$ for the star dichromatic number of planar digraphs:
\begin{theorem}
Let $D$ be a simple planar digraph. Then $\vec{\chi}^\ast(D) \leq 2.5$.
\end{theorem}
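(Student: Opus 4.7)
The strategy is to reduce the statement to a bound on the circular vertex arboricity of the underlying undirected planar graph $G$ of $D$. By Remark \ref{rem} we have $\vec{\chi}^\ast(G)\le \text{va}^\ast(G)$, and since $\vec{\chi}^\ast(G)$ for a graph is defined as the maximum of $\vec{\chi}^\ast(D')$ over all orientations $D'$ of $G$, in particular $\vec{\chi}^\ast(D)\le \vec{\chi}^\ast(G)$. Thus it suffices to prove $\text{va}^\ast(G) \le 5/2$ for every simple planar graph $G$, i.e.\ to produce a $(5,2)$-tree-colouring in the sense of Definition \ref{DefVa}: a partition $V(G) = V_0 \cup V_1 \cup V_2 \cup V_3 \cup V_4$ such that for each $i \in \mathbb{Z}_5$ the subgraph $G[V_i \cup V_{i+1}]$ is a forest. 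Notice that every such tree-colouring yields a valid acyclic $(5,2)$-colouring for \emph{every} orientation of $G$, so any bound obtained this way carries over to $\vec{\chi}^\ast(D)$.

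For the construction I would exploit the $5$-degeneracy of planar graphs. Order $V(G)$ as $v_1,\ldots,v_n$ so that each $v_i$ has at most five neighbours in $\{v_1,\ldots,v_{i-1}\}$, and build the colouring greedily. Having coloured $v_1,\ldots,v_{i-1}$ by a valid $(5,2)$-tree-colouring of $G[\{v_1,\ldots,v_{i-1}\}]$, I would pick a colour $j \in \mathbb{Z}_5$ for $v_i$ such that neither $G[V_{j-1}\cup V_j]$ nor $G[V_j \cup V_{j+1}]$ acquires a cycle through $v_i$. A colour $j$ is \emph{blocked} only when two earlier neighbours of $v_i$ with colours in $\{j-1,j\}$ (respectively $\{j,j+1\}$) already lie in a common tree component of the relevant forest. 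Since each earlier neighbour of $v_i$ only influences the three colour classes cyclically closest to its own colour, and since there are at most five such neighbours against five candidate colours, an averaging/counting argument should reveal an admissible colour in generic situations.

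The main obstacle is the worst case of the inductive step, where every candidate colour is simultaneously blocked by some pair of neighbours of $v_i$; in such a configuration one must perform a local recolouring move in the spirit of Kempe chains to repair the conflict. A clean treatment of this case seems to require invoking planarity beyond mere $5$-degeneracy (for example by ruling out, via an Euler-type argument, the simultaneous occurrence of all five obstructions around a single low-degree vertex), and is the technical heart of the argument. Since the bound $\text{va}^\ast(G) \le 5/2$ for planar $G$ is exactly the kind of statement studied in \cite{fracvert}, the cleanest route in the writeup is probably to appeal to the structural results there for the $(5,2)$-tree-colouring, and let the reduction above convert this into the desired bound on the star dichromatic number.
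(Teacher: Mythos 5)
Your reduction is exactly the paper's proof: observe that an acyclic $(k,d)$-tree-colouring of the underlying undirected graph is simultaneously an acyclic $(k,d)$-colouring of every orientation, so $\vec{\chi}^\ast(D)\le\text{va}^\ast(G)$ by Remark~\ref{rem}, and then appeal to the bound $\text{va}^\ast(G)\le 5/2$ for simple planar $G$ proved in \cite{fracvert}. Your speculative middle paragraph sketching a from-scratch greedy/Kempe-chain proof of that arboricity bound is (as you yourself note) incomplete, but it is not part of the argument you ultimately recommend, and the route you settle on coincides with the paper's.
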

\begin{proof}
In \cite{fracvert} it is proved that $\text{va}^\ast(G) \leq 2.5$ for simple planar graphs. The claim now follows from Remark \ref{rem}.
\end{proof}

While the star dichromatic number can be considered an oriented version of
the circular vertex arboricity there does not seem to be an unoriented
counterpart to the circular colourings introduced by Bokal et al.. Note
that any map $c:V(G) \rightarrow S_p \simeq [0,p)$ that is a
simultaneous weak circular $p$-colouring of each possible orientation
of a graph $G$ which is no forest necessarily must have $p \ge 2$. 

The bound $\vec{\chi}^\ast(D) \leq 2$ for planar digraphs as a
consequence of the 2-colour-conjecture is best-possible as there exist
simple planar digraphs with star dichromatic number arbitrarily close
to $2$. This is a consequence of the case $g=3$ of the following
theorem.

\begin{theorem}
  For every $g \ge 3$ and every $\varepsilon>0$, there exists a planar
  digraph $D$ of digirth $g$ with $\vec{\chi}^\ast(D) \in
  [\frac{g-1}{g-2}-\varepsilon,\frac{g-1}{g-2}]$.
\end{theorem}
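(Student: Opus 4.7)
My plan is to produce, for each $\varepsilon > 0$ and each $g \ge 3$, a family of planar digraphs $D_n$ of digirth $g$ with $\vec{\chi}^\ast(D_n) \to \frac{g-1}{g-2}$ as $n \to \infty$; then choose $n$ large enough that $\vec{\chi}^\ast(D_n)$ lies in the required interval. The natural template to keep in mind is the directed cycle $\vec{C}_{g-1}$ of \cref{circles}, which already attains $\vec{\chi}^\ast = \frac{g-1}{g-2}$ but has digirth $g-1$ rather than $g$. The whole point of the construction is therefore to raise the digirth by one while keeping as much of this fractional behaviour as possible. Concretely, I would build $D_n$ by iteratively refining $\vec{C}_{g-1}$: replacing each arc (or each vertex) of a long cyclic template by a small planar ``gadget'' of digirth $g$ that preserves a $(g-1)t$-to-$(g-2)t$ cyclic flow of colours on the macroscopic scale. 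The length parameter $n$ controls the resolution of the refinement.

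For the upper bound $\vec{\chi}^\ast(D_n) \le \frac{g-1}{g-2}$ I would exhibit an explicit acyclic $((g-1)t,(g-2)t)$-colouring for a suitable $t=t(n)$ and invoke \cref{zshg} to translate it into a real-valued $\frac{g-1}{g-2}$-colouring. The colouring would be dictated by the cyclic structure of $D_n$: assign the vertex ``closer to position $i$ on the template'' the colour $i \bmod (g-1)t$, so that along any directed cycle the colours make essentially one full turn around $\mathbb{Z}_{(g-1)t}$, and any cyclic interval of $(g-2)t$ consecutive colours induces a subdigraph that is acyclic because every cycle of $D_n$ must traverse all $g-1$ ``colour blocks''.

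For the lower bound $\vec{\chi}^\ast(D_n) \ge \frac{g-1}{g-2}-\varepsilon$ I would lean on the inequality chain $\vec{\chi}^\ast(D_n) \ge \vec{\chi}_f(D_n) \ge \frac{|V(D_n)|}{\vec{\alpha}(D_n)}$ provided by \cref{3Inequ} and \cref{maxsize}. So the problem reduces to bounding $\vec{\alpha}(D_n) \le \frac{g-2}{g-1}|V(D_n)| + o(|V(D_n)|)$. I would prove this by a structural argument: any acyclic vertex subset of $D_n$ restricts, in each gadget, to a set that must miss at least a $\frac{1}{g-1}$-fraction of the vertices (otherwise one of the many designed cycles of length $g$ would survive inside the subgraph). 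A straightforward averaging or LP-duality computation then yields the claimed bound up to lower order terms, which can be made smaller than $\varepsilon \cdot |V(D_n)|$ by choosing $n$ large enough.

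The hardest step, I expect, is the construction itself. The bound $\frac{g-1}{g-2}$ is the ``natural'' star dichromatic number of $\vec{C}_{g-1}$, but insisting on digirth $g$ forces us to break every length-$(g-1)$ cycle in the template without destroying the tight fractional picture; simultaneously the whole digraph must remain planar, which rules out packing short cycles too densely. Verifying that the specific gadget one picks really does realise digirth exactly $g$ (no hidden shorter cycle created at gadget interfaces), stays planar in the assembled $D_n$, admits the clean cyclic colouring from paragraph two, and has no large ``cheating'' acyclic subset that would spoil the lower bound in paragraph three, is where the main technical work lies.
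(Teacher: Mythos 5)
Your overall scaffolding is the same as the paper's: bound $\vec{\chi}^\ast$ from below by $\vec{\chi}_f \ge |V(D)|/\vec{\alpha}(D)$ (via \cref{3Inequ} and \cref{maxsize}) and from above by exhibiting an explicit acyclic $(g-1,g-2)$-colouring, then choose the family member large enough to fall inside the $\varepsilon$-window. That part is sound. But there is a genuine gap, and you name it yourself in your last paragraph: the construction of the planar digirth-$g$ family $D_n$ with $\vec{\alpha}(D_n) \le \frac{g-2}{g-1}|V(D_n)| + o(|V(D_n)|)$ is never actually given. ``Replace each arc (or vertex) of a long cyclic template by a small planar gadget of digirth $g$'' is not a construction until you specify the gadget, check that no shorter directed cycle appears at gadget interfaces, check planarity of the assembled digraph, and prove the bound on $\vec{\alpha}$. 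That is not a routine verification to be deferred; it is the entire content of the theorem, since the ambient inequality machinery is already available. As stated, the proof is a plan, not an argument.

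For comparison, the paper fills exactly this hole by invoking an existing construction due to Knauer et al.\ (\cite{withoutlargeacyclic}): a family $D_f^g$ built inductively from a directed $g$-cycle by repeatedly attaching a new directed path $P=s_1,\ldots,s_{g-1}$ on $g-1$ fresh vertices to two existing vertices $x \ne y$, with the connecting arcs oriented so that both $x,P$ and $y,P$ close up into directed $g$-cycles. This gives $|V(D_f^g)| = f(g-1)+1$, digirth exactly $g$, planarity, and $\vec{\alpha}(D_f^g) \le \frac{|V(D_f^g)|(g-2)+1}{g-1}$, which plugged into your lower-bound chain yields a ratio tending to $\frac{g-1}{g-2}$. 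The inductive structure also hands you the upper bound directly: extend an acyclic $(g-1,g-2)$-colouring of $D_{f-1}^g$ by giving the $g-1$ new path vertices the $g-1$ distinct colours, which creates no monochromatic-window directed cycle. Your sketched ``refine $\vec{C}_{g-1}$ by gadgets'' idea is not obviously the same and would need substantial work to be shown viable; in particular, naively subdividing arcs of a short cycle lengthens that cycle and tends to \emph{lower} $\vec{\chi}^\ast$, so one must instead pack many overlapping $g$-cycles, which is precisely what the Knauer construction does. To repair your proof, either reconstruct such a family explicitly and carry out the digirth/planarity/$\vec{\alpha}$ verifications, or cite the existing construction as the paper does.
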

\begin{proof}
  Knauer et al.\ \cite{withoutlargeacyclic} constructed a sequence
  $(D_f^g)_{f \ge 1}$ of planar digraphs of digirth $g$ with
  $|V(D_f^g)|=f(g-1)+1$ and so that for the maximum order
  $\vec{\alpha}(D_f^g)$ of an induced acyclic subdigraph of $D_f^g$,
  we have $\vec{\alpha}(D_f^g) \leq \frac{|V(D_f^g)|(g-2)+1}{g-1}$ for
  all $f \ge 1$. Applying Lemma \ref{maxsize} this yields
  $$\vec{\chi}^\ast(D_f^g) \ge \vec{\chi}_f(D_f^g) \ge
  \frac{|V(D_f^g)|}{\vec{\alpha}(D_f^g)} \ge
  \frac{|V(D_f^g)|(g-1)}{|V(D_f^g)|(g-2)+1}.$$ Since the latter
  expression is convergent to $\frac{g-1}{g-2}$ for $f \rightarrow
  \infty$, it remains to show that all the $D_f^g, f \ge 1$ admit
  acyclic $(g-1,g-2)$-colourings.  This is easily seen using the
  inductive construction described in \cite{withoutlargeacyclic}.  For
  $f \ge 2$, $D_{f}^g$ arises from $D_{f-1}^g$ by adding an extra
  directed path $P=s_1,...,s_{g-1}$ with $g-1$ new vertices whose only
  connections to $V(D_f^g)$ consist of two vertices $x \neq y \in
  V(D_{f-1}^g)$ that both are adjacent to $x_1$ and $x_{g-1}$ via
  edges that are oriented in such a way that $x,P$ as well as $y,P$
  induce directed cycles. Now we inductively find an acyclic
  $(g-1,g-2)$-colouring by colouring the vertices of $P$ with the
  $g-1$ pairwise distinct colours. Clearly, this cannot create any new
  directed cycle using at most $g-2$ colours.
\end{proof}
There is some evidence that the construction given in
\cite{withoutlargeacyclic} is asymptotically best-possible. Thus, we
are tempted to  generalize the
2-colour-conjecture as follows:
\begin{conjecture} \label{girth} For every planar digraph $D$ of
  digirth at least $g \ge 3$, we have $\vec{\chi}^\ast(D) \leq
  \frac{g-1}{g-2}$. \n In other words, $D$ admits a colouring with
  $g-1$ colours so that each directed cycle in $D$ contains each
  colour at least once.
\end{conjecture}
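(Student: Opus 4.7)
The conjecture asks for an acyclic $(g-1,g-2)$-colouring of $D$, equivalently a partition $V(D) = V_0 \cup \cdots \cup V_{g-2}$ such that every directed cycle of $D$ intersects each $V_j$. For $g=3$ this is exactly the Neumann--Lara $2$-colour-conjecture, so any complete proof of \cref{girth} would resolve that long-standing problem as a special case; I will therefore focus on what could plausibly be attempted rather than pretend a short proof exists.

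A first approach is induction on $|V(D)|$ combined with a discharging argument on a fixed plane embedding of $D$. Natural candidates for reducible configurations are a facial directed $g$-cycle, a vertex of small total degree, or a small directed edge cut (which by \cref{CutVerbinden} already yields an immediate reduction for $\vec{\chi}^\ast$). Contracting or removing such a configuration leaves a smaller planar digraph of digirth at least $g$, to which the inductive hypothesis applies; the task is then to extend the given colouring to the configuration. The delicate part is that acyclicity must hold simultaneously on all $g-1$ cyclic windows of width $g-2$ in $\mathbb{Z}_{g-1}$, which is strictly more restrictive than ordinary colouring extension and imposes nontrivial constraints on the colour available at each reintroduced vertex.

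A second line of attack combines the preceding sharpness theorem with \cref{maxsize}. The construction of Knauer et al.\ achieves $\vec{\chi}_f \to \frac{g-1}{g-2}$ along the extremal sequence, and on those digraphs the LP-optimum is in fact attained by a cyclic solution corresponding to an acyclic $(g-1,g-2)$-colouring. One could try to lift this structural observation to all planar digraphs of digirth at least $g$, using the cyclic order around faces provided by the planar embedding to convert an optimal fractional certificate into an integer colouring whose colour classes respect the circular window condition of \cref{DefDir}.

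The main obstacle is conceptual rather than technical. The current general bound $\vec{\chi}^\ast(D) \le 5/2$ comes entirely from circular vertex arboricity, a parameter that does not see the orientation and therefore cannot distinguish digirth $g$ from girth $g$; \cref{rem} alone will never suffice because the undirected analogue of the conjecture is false. Genuine progress on \cref{girth} must exploit orientation in an essential way. A plausible intermediate target is therefore to establish the conjecture for all $g$ beyond some explicit threshold, where the acyclicity windows have width $g-2 \ge $ (girth-dependent constant) and the extension step becomes looser, and only afterwards attempt the sharp case $g=3$ that coincides with Neumann--Lara.
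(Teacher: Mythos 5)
This statement is \Cref{girth}, an open conjecture; the paper offers no proof of it, and for $g=3$ it is equivalent to the Neumann--Lara $2$-colour-conjecture, which remains unresolved. You correctly identified that no short proof can exist and, rather than fabricating one, gave an honest survey of plausible attack strategies. That is the right call, and your diagnosis of the central obstruction is accurate: the only general bound the paper obtains for planar digraphs, $\vec{\chi}^\ast(D)\le 5/2$, passes through circular vertex arboricity via \Cref{rem}, a parameter blind to orientation; since the unoriented analogue of the conjectured bound already fails at $g=3$ (planar graphs of vertex arboricity $3$ exist), any proof must use the orientation in an essential way, which the current toolkit does not.

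Two small caveats on your second approach. First, the Knauer et al.\ sequence $D_f^g$ has $\vec{\chi}_f(D_f^g)$ only \emph{converging} to $\frac{g-1}{g-2}$, not attaining it, so there is no single extremal digraph on which an LP-optimum equals the target and could be ``read off'' as a $(k,d)$-colouring; the sharpness statement the paper proves is asymptotic. Second, an optimal fractional certificate need not be realizable by a circular family of acyclic sets at all --- $\vec{\chi}_f$ can be strictly below $\vec{\chi}^\ast$ even for planar digraphs (the even wheels in the paper give a gap of $\frac{1}{6}-\varepsilon$) --- so ``lifting'' a fractional solution to a circular one is itself a gap of the same order of difficulty as the conjecture. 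Your first approach (induction with discharging on a plane embedding, using directed cuts and \Cref{CutVerbinden} as free reductions) is the more promising direction, and your suggested intermediate target of proving the bound for all sufficiently large $g$ before attacking $g=3$ is sensible and consistent with the paper's own partial result that $\vec{\chi}_f(D)\le \frac{g}{g-5}$ for $g\ge 6$.
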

The  above implies  that  this bound for a  given $g$,  if  true, is
best-possible. We furthermore note that these  upper bounds for $g \ge
4$ do not apply for the circular dichromatic number $\vec{\chi}_c(D)$.
(take e.g. $\vec{C}_g^s$ from example \ref{ex}).  We  are not aware of
an example for which  the bound in  the above inequality  is attained
with equality for $g \ge 4$. For the case $g=3$, we have the following (minimal) example of a simple planar digraph with star dichromatic number exactly $2$.
\begin{figure} 
\centering
\includegraphics[scale=1]{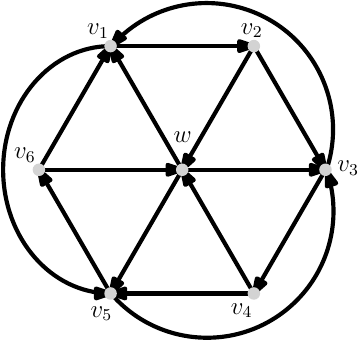}
\caption{The simple planar digraph $D_7$ with star dichromatic number exactly $2$.} \label{svsd}
\end{figure}
\begin{proposition}
The digraph $D_7$ depicted in Figure \ref{svsd} has $\vec{\chi}^\ast(D_7)=2$, while $\vec{\chi}^\ast(D) \leq \frac{5}{3}$ for any simple planar digraph $D$ on at most $5$ vertices. 
\end{proposition}
\begin{proof}
  The labels $w,v_1,\ldots,v_6$ of the vertices in $D_7$ refer to
  Figure \ref{svsd}.  To see that $\vec{\chi}^\ast(D_7) \leq 2$,
  notice that $c:V(D_7) \rightarrow \{0,1\}$,
  $c(w):=c(v_1):=c(v_3):=0, c(v_2):=c(v_4):=c(v_5):=c(v_6):=1$ defines
  a valid $2$-colouring of $D_7$. Assume now for a proof by
  contradiction that we had $\vec{\chi}^\ast(D_7)<2$ and thus
  (according to Theorem \ref{rational} and Proposition \ref{zshg})
  that there was an acyclic $(k,d)$-colouring of $D_7$ where $1 \leq d
  \leq k \leq 7$ are integers such that $\frac{k}{d} <2$. The latter
  implies $\frac{k}{d} \leq \frac{7}{4}$ and consequently the
  existence of an acyclic $(7,4)$-colouring $c_{7,4}:V(D_7)
  \rightarrow \mathbb{Z}_7 \simeq \{0,\ldots,6\}$ of $D_7$. Without
  loss of generality, we may assume that $c(w)=0$. Because any pair of
  elements is contained in a cyclic subinterval of length $4$ in
  $\mathbb{Z}_7$, the vertices of any directed triangle in $D_7$ must
  receive pairwise distinct colours. For any vertex $v_i$ is contained
  in a directed triangle together with $w$, we must have $c(v_i) \neq
  0, i=1,\ldots,6$. Considering the directed triangle $v_1v_5v_3$, we
  find that at least one of the vertices $v_1,v_3,v_5$ must have
  colour $1$ or $6$. Because of the symmetry of $D_7$ we may assume
  that $c(v_1) \in \{6,1\}$. Possibly after replacing $c$ with $-c$ we
  can even assume $c(v_1)=1$. Looking at the triangle $v_1,v_2,w$,
  this forces $c(v_2)=4$. As $v_1v_2v_3$ forms a directed triangle, it
  follows that $c(v_3) \in \{5,6\}$. Assume first that $c(v_3)=6$.
  Looking at the triangle $wv_3v_4$, this forces $c(v_4)=3$, and
  because $v_3v_4v_5$ is a directed triangle as well, this implies
  $c(v_5) \in \{1,2\}$. This now is a contradiction, because it means
  that the colour set of the directed triangle $v_1v_3v_5$ is
  contained in the cyclic subinterval $\{6,0,1,2\}$ of $\mathbb{Z}_7$.
  Consequently, we may assume that we are in the case of $c(v_3)=5$.
  The directed triangle $wv_3v_4$ now forces $c(v_4) \in \{2,3\}$, and
  as the colour set of $v_3v_4v_5$ must not be contained in
  $\{2,3,4,5\}$, $c(v_5)$ has to be either $0$, $1$ or $6$. This now
  leads to the contradiction that $\{c(v_1),c(v_3),c(v_5)\} \subseteq
  \{5,6,0,1\}$.

The second part of the claim can be verified by checking the existence of $(5,3)$-colourings of all simple planar digraphs on up to $5$ vertices. This is a simple but lengthy case distinction by hand but can be easily checked using a brute force program run on a standard personal computer.
\end{proof}
Naturally, one might expect that the $K_4$ is an extremal example for colourings of simple planar digraphs. Surprisingly, this is not the case. Considering more generally odd and even wheels we find:

\pagebreak[3]

\begin{example}
For $k\ge 3$ denote by $W_k$ the wheel with $k+1$ vertices. 
\begin{enumerate}
\item[(A)] If $k$ is odd, then
$$\vec{\chi}^\ast(W_k)=\vec{\chi}_f(W_k)= \frac{3}{2}.$$
\item[(B)] If $k$ is even, then 
$$\vec{\chi}^\ast(W_k) = \frac{5}{3} \quad \text{ but } \quad     
\vec{\chi}_f(W_k)= \frac{3k-2}{2k-2}. $$
\end{enumerate}
\end{example}

\begin{proof}
In the following, whenever we refer to a vertex $w$, it is to be understood as the dominating vertex of the respective wheel we deal with. In the following, wheels are considered to be canonically embedded in the plane such that $w$ is the only inner vertex. \par\noindent
\begin{itemize}
\item[(A)] As a wheel contains a triangle, using Corollary
  \ref{circles}, we find $\frac{3}{2}=\vec{\chi}_f(C_3) \leq
  \vec{\chi}_f(W_k) \leq \vec{\chi}^\ast(W_k).$ Next, we construct an
  acyclic $(3,2)$-coloring.  Since $k$ is odd, along the circular
  ordering of incoming and outgoing edges around $w$, there has to be
  a consecutive pair of edges with the same orientation, i.e., both
  incoming or both outgoing. Denote by $x_1,x_2$ their end vertices on
  the rim. We now color $W_k$ by assigning $0$ to $w$ and colouring
  the outer cycle using alternatingly $1$ and $2$ except for
  $x_1,x_2$, which both receive $1$. Clearly, this is an acyclic
  $(3,2)$-coloring, unless the outer cycle is directed. In that case,
  not only the triangle $wx_1x_2$, but also one of its neighbouring
  triangles is not directed. Hence, we may assume that, say, $x_1$ is
  not a vertex of a directed triangle. Now recoloring $x_1$ with color
  $0$ yields an acyclic $(3,2)$-coloring. Hence for every orientation
  $D$ of $W_k$ we find $\vec{\chi}^\ast(D) \leq \frac{3}{2}$.
\item[(B)] We first  prove  $\vec{\chi}^\ast(D) \leq \frac{5}{3}$  for
  all  orientations $D$ of  $W_k$. If  the  outer cycle is  undirected
  similar to case $(A)$ we find an acyclic $(3,2)$-colouring of $D$ by
  assigning $0$ to  the central vertex and  alternatingly $1,2$ to the
  outer vertices. Hence we may assume that the outer cycle is directed
  in $D$. If there exists a  pair of consecutive vertices $x_1,x_2$ on
  the    outer cycle where $wx_1x_2$   is   not  a directed  triangle,
  recoloring    either  $x_1$  or $x_2$  by    $0$   yields an acyclic
  $(3,2)$-coloring as  in the odd  case.  So, we   may assume that the
  outer  cycle is  directed  and   all   edges incident  to  $w$   are
  alternatingly incoming and outgoing in the cyclic order of $E_D(w)$.
  Hence, in  the cyclic order, every  second triangle is  directed and
  the others are not. 
%
  We define an acyclic $(5,3)$-colouring of $D$ by starting with a
  $0,2,3$-colouring of the vertices, where $w$ receives colour $0$ and
  the outer vertices alternating colours $2$ and $3$ such that the
  directed triangles have its vertices coloured by $0,3,2$ in cyclic
  order. Now choose one directed edge whose tail is coloured by $2$
  and recolour its head with $4$ and its tail with $1$. It is now
  easily seen that the vertices of no directed triangle nor of the
  outer cycle are contained in the union of three consecutive colour classes of
   colours of $\mathbb{Z}_5=\{0,1,2,3,4\}$, which
  proves $\vec{\chi}^\ast(D) \leq \frac{5}{3}$ also in this case.

  Next we show that $\vec{\chi}^\ast(W_k) \ge \frac{5}{3}$.  Clearly,
  this can be true only for the orientation considered the last for
  the upper bound.  Let $p$ be any real number admitting an acyclic
  colouring. As $D$ contains a directed triangle, $p \ge \frac{3}{2}$.
  Assume for a contradiction $ p<\frac{5}{3}$ and let $c:V(D)
  \rightarrow [0,p)$ be an acyclic $p$-colouring of $D$.  We may
  assume $c(w)=0$.  We will show that $|c(v)|_p \ge 2-p$ for all $v
  \in V(D)\backslash \{w\}$.  

  Assume this was wrong.  Possibly replacing $c$ by $\tilde c := p-c
  \mod p$ this yields the existence of a vertex $v \in V(D)\backslash
  \{w\}$ such that $0 \le c(v) < 2-p$.  Let $u$ be the other vertex in
  the unique directed triangle containing $w$ and $v$. Let
  $m:=\frac{c(v)}{2} \in S_p$, then $S_p \subseteq [0,(\frac{p}{2}+m)
  \text{ mod }p]_p \cup [(\frac{p}{2}+m) \text{ mod }p, c(v)]_p$ and
  $|\frac{p}{2}+m -c(v)|_p = \frac{p}{2} + m < \frac{p+2-p}{2}=1$.
  Hence in any case $= \{c(w),c(u),c(x)\}$ is contained in an
  interval of length strictly smaller than $1$ contradicting $c$
  being an acyclic $p$-colouring.

  Thus, indeed $|c(v)|_p \ge 2-p>\frac{1}{3}$ for all outer vertices.
  Hence the image of the outer directed cycle under $c$ is contained
  in an open cyclic subinterval of length $p-\frac{2}{3}<1$, again
  contradicting the definition of an acyclic $p$-colouring. We
  conclude $\vec{\chi}^\ast(W_k) \ge \vec{\chi}^\ast(D) \ge
  \frac{5}{3}$ for this special orientation, proving the claims for
  the star dichromatic number.
  
  We now turn to  proving $\vec{\chi}_f(W_k) \leq
  \frac{3k-2}{2k-2}$. 
  Denote by $V^+,V^-$ a bipartition of the outer cycle of $W_k$. Note
  that $V^+ \cup \{w\}, V^-\cup\{w\}$ and all subsets of $V(W_k)
  \backslash \{w\}$ of size $k-1$ induce forests in $W_k$, hence also
  acyclic sets for any orientation of $W_k$.  We construct an instance
  of~\eqref{primal} by putting a weight of $\frac{1}{2}$ on each of
  $V^+ \cup \{w\}, V^- \cup \{w\}$, and a weight of $\frac{1}{2(k-1)}$
  on each of the $k$ subsets of $V(W_k) \backslash \{w\}$ of size
  $k-1$, all  other acyclic vertex sets receive a weight of $0$.
  We compute $\frac{1}{2}+\frac{1}{2} \geq 1$ for $w$ and $(k-1)
  \cdot \frac{1}{2(k-1)}+\frac{1}{2} \geq 1$ for each outer vertex.
  Hence, we have a feasible instance ~\eqref{primal}, verifying
  $\vec{\chi}_f(W_k) \leq \frac{1}{2}+\frac{1}{2}+
  \frac{k}{2(k-1)}=\frac{3k-2}{2k-2}$ as
  claimed. 

  Finally, we prove $\vec{\chi}^\ast(W_k) \ge \frac{3k-2}{2k-2}$ using
  the same special orientation $D$ of $W_k$ where the outer cycle is
  directed and the orientations of edges incident to $w$ are
  alternating in cyclic order. We construct a suitable instance of the
  dual program \eqref{dual}, defining $y_w:=\frac{k-2}{2k-2}$ and
  $y_v:=\frac{1}{k-1}$ for every outer vertex.  Let $A$ be a maximal
  acyclic set. If $w \notin A$, then, since the
  outer cycle is directed, $A=V(W_k) \backslash \{w,x\}$ for some
  outer vertex $x$. In this case, we verify
$$\sum_{v \in A}{y_v}=(k-1)\cdot\frac{1}{k-1}=1.$$
If $w \in A$, clearly $A$ contains at most one vertex of each directed triangle. Therefore \mbox{$|A \setminus \{w\}| \leq \frac{k}{2}$} and again we verify the restriction:
$$\sum_{v \in A}{y_v} \leq \frac{k}{2} \cdot \frac{1}{k-1}+\frac{k-2}{2k-2}=1.$$
Using linear programming duality we find $\vec{\chi}_f(D) \ge \sum_{v
  \in
  V(W_k)}{y_v}=\frac{k}{k-1}+\frac{k-2}{2k-2}=\frac{3k-2}{2k-2}$.
\end{itemize}
\end{proof}
Concerning fractional dichromatic numbers, Conjecture \ref{girth} would imply a tight upper bound of $\vec{\chi}_f(D) \leq \frac{g-1}{g-2}$ for planar digraphs of digirth $g$. In the following, we want to approach this upper bound by showing that indeed, $\vec{\chi}_f(D)$ tends to $1$ for planar digraphs of large digirth. This is not at all obvious, as it is known that when dropping the restriction of planarity, directed graphs with arbitrarily large digirth may have arbitrarily large dichromatic number at the same time (cf. \cite{twores}). In order to do so, we recall the following terminologies as well as a related famous max-min-principle, known as Lucchesi-Younger-Theorem:
\begin{definition}
\noindent
\begin{itemize}
\item A  \emph{clutter} is a set family with no members containing each other.
\item A subset of arcs in a digraph is called \emph{dijoin} if it intersects ever directed cut.
\item A subsets of arcs in a digraph is called \emph{feedback arc set} if it intersects every directed cycle.
\end{itemize}
\end{definition} 
\begin{theorem}[Lucchesi-Younger, cf. \cite{lucchesiyounger}]
Let $D$ be a digraph and $w:E(D) \rightarrow \mathbb{N}_0$ a weighting of the edges with non-negative integers. Then the minimal weight of a dijoin in $D$ equals the maximum number of (minimal) dicuts in $D$ so that every arc $a$ is contained in at most $w(a)$ of them. 
\end{theorem}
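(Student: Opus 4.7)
The plan is to prove this min-max relation via linear programming duality combined with a total dual integrality argument. First I would phrase the weighted minimum dijoin problem as the integer program
\[
\min \sum_{a \in E(D)} w(a) x_a \quad \text{s.t.} \quad \sum_{a \in C} x_a \ge 1 \text{ for every dicut } C, \quad x \in \{0,1\}^{E(D)},
\]
whose LP relaxation replaces integrality by $x \ge 0$. The LP dual assigns $y_C \ge 0$ to each dicut $C$ and maximizes $\sum_C y_C$ subject to $\sum_{C \ni a} y_C \le w(a)$ for every arc $a$; an integer optimal dual solution is precisely a packing of dicuts with the required arc-multiplicity bounds.

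Weak duality $\max \le \min$ is immediate: each dicut in the packing intersects any dijoin, and summing $w$-weights yields the inequality. The substantive part is strong duality together with integrality on both sides of the pair.

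The key structural observation is that the family of dicuts forms a \emph{crossing family}. If $X_1, X_2 \subseteq V(D)$ both satisfy that $D(X_i,\overline{X_i})$ is a directed cut (that is, no arcs enter $X_i$), and both $X_1 \cap X_2$ and $X_1 \cup X_2$ are proper and non-empty, then $D(X_1 \cap X_2, \overline{X_1 \cap X_2})$ and $D(X_1 \cup X_2, \overline{X_1 \cup X_2})$ are again directed cuts, and the corresponding arc-set map is submodular in this crossing sense. I would then invoke the Edmonds--Giles theorem on submodular flows, which asserts that the dijoin LP is totally dual integral. This yields integer optima on both sides simultaneously, producing the desired packing of dicuts together with a matching dijoin.

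The main obstacle is executing the uncrossing step at the heart of the TDI argument: given an arbitrary optimal fractional dual $y^*$, one must iteratively replace pairs of crossing dicuts $D(X_1,\overline{X_1}), D(X_2,\overline{X_2})$ in the support by the dicuts corresponding to $X_1 \cap X_2$ and $X_1 \cup X_2$ of equal total $y$-weight (using crossing submodularity of the cut function) so as to produce an optimum supported on a laminar family. Once laminarity is achieved, integrality follows by a standard network-flow-type argument on the tree naturally associated to the laminar family, and the optimal packing of dicuts with arc-multiplicities bounded by $w$ can then be read off directly.
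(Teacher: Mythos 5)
The paper does not supply a proof of this statement: the Lucchesi--Younger theorem is stated as an imported classical result (attributed to \cite{lucchesiyounger}) and is then only \emph{used}, via the remark that it certifies the Max-Flow-Min-Cut property for the clutter of minimal dicuts. So there is nothing in the paper to compare your argument against; I will instead assess the sketch on its own merits.

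Your outline follows the standard modern route to Lucchesi--Younger: write down the covering LP and its dual packing LP, observe weak duality, and obtain total dual integrality by uncrossing over the family $\{X\subseteq V : \delta^{\mathrm{in}}(X)=\emptyset,\ \emptyset\neq X\neq V\}$, which is closed under intersections and unions of crossing members. Two points deserve flagging. First, invoking the Edmonds--Giles theorem and then also carrying out an uncrossing argument is redundant: Edmonds--Giles (applied with this crossing family and the constant crossing-submodular function $f\equiv -1$) already asserts total dual integrality, so once you cite it the uncrossing paragraph is not needed; conversely, if you uncross by hand you do not need the black box. Second, the two steps you defer as ``standard'' are precisely where all the work lies: termination of the uncrossing procedure requires a potential such as $\sum_X y_X\,|X|\,|V\setminus X|$ together with control on the growth of the support, and integrality of a basic optimum supported on a cross-free family requires the network-matrix (total unimodularity) argument. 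These can be filled in from the literature, so the outline is sound rather than wrong, but as written it postpones exactly the substantive part of the proof. One strengthening you could make explicit: when $X_1$ and $X_2$ both have no entering arcs and cross, no arc in either direction can run between $X_1\setminus X_2$ and $X_2\setminus X_1$, so $w(\delta^{\mathrm{out}}(\cdot))$ is in fact \emph{modular} on this family, not merely submodular; this exactness is what makes the uncrossing swap preserve the dual arc constraints without introducing slack.
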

The terminology used in the following refers to \cite{corn}, especially Chapter 1.1. According to Definition 1.5 and Theorem 1.25 in \cite{corn}, the Lucchesi-Younger-Theorem means that the clutter of minimal directed cuts in any digraph admits the Max-Flow-Min-Cut-Property (MFMC). Consecutive application of Theorem 1.8 and Theorem 1.17 in \cite{corn}, where the latter is a theorem of Lehman (cf. \cite{lehman}), yields that the blocker of the clutter of minimal directed cuts, namely the clutter of minimal dijoins, is ideal. This means the following statement which was also pointed out in the article \cite{OPG} on Woodall's Conjecture in the Open Problem Garden:
\begin{theorem}
Let $D$ be a digraph and let $g$ denote the minimal size of a directed cut in $D$. Then there is $m \in \mathbb{N}$ and a collection of dijoins $J_1,\ldots,J_m$ equipped with a weighting $x_1,...,x_m \in \mathbb{R}_+$ such that $x_1+\ldots+x_m=g$ and for every arc $e \in E(D)$, we have $\sum_{i: e \in J_i}{x_i} \leq 1$. 
\end{theorem}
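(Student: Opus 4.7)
The statement is a fractional analogue of Woodall's conjecture: it asserts that the maximum weight of a fractional packing of dijoins equals the minimum size $g$ of a directed cut. My plan is to deduce this via linear programming duality from the ideality of the clutter of minimal dijoins, which is precisely what the chain of results from \cite{corn}, \cite{lucchesiyounger} and \cite{lehman} quoted immediately before the statement supplies.

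First I would introduce the fractional packing LP
$$\max\Bigl\{\sum_{J}x_{J}\;\Big|\;\sum_{J:\,e\in J}x_{J}\leq 1\ \text{for all}\ e\in E(D),\ x\geq 0\Bigr\}$$
over all (minimal) dijoins $J$ of $D$, together with its dual
$$\min\Bigl\{\sum_{e\in E(D)}y_{e}\;\Big|\;\sum_{e\in J}y_{e}\geq 1\ \text{for all dijoins}\ J,\ y\geq 0\Bigr\}.$$
Any $0/1$-valued feasible solution of the dual is the characteristic vector of an arc subset meeting every dijoin, i.e.\ of a directed cut; hence the integer optimum of the dual equals $g$.

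The substantive step is to argue that the dual LP already attains its optimum at an integral point. The Lucchesi-Younger theorem gives the Max-Flow-Min-Cut property for the clutter of minimal directed cuts, which by Theorem~1.8 of \cite{corn} is equivalent to its ideality; Lehman's theorem (Theorem~1.17 of \cite{corn}) then transfers ideality to the blocker. Since the blocker of the clutter of minimal directed cuts is the clutter of minimal dijoins, the dual LP admits an integral optimal solution. Combined with the previous paragraph, this shows the dual optimum equals $g$, and LP duality yields that the packing LP also has value $g$.

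Any optimal fractional packing $(x_{J})$ produces dijoins with positive weight summing to $g$ that satisfy the required arc constraints. The only remaining point is cosmetic, namely to present the list with exactly $g$ entries: if the positive support of a basic optimal solution contains more than $g$ dijoins, a Carath\'eodory-type reduction exploiting that the objective value is already $g$ trims the support, while if it contains fewer we simply pad by zero-weight dummy entries. I expect this bookkeeping to be the only minor obstacle; the substance of the argument is a direct application of the blocker-ideality machinery together with LP duality.
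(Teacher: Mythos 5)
Your argument reproduces the paper's own reasoning exactly: Lucchesi--Younger gives the Max-Flow-Min-Cut property for the clutter of minimal directed cuts, hence it is ideal; Lehman's theorem transfers ideality to the blocker, which is the clutter of minimal dijoins; and ideality of the dijoin clutter combined with LP duality yields the fractional packing of value $g$. The only minor quibble is that MFMC \emph{implies} idealness rather than being equivalent to it, but since you only use that one direction, the argument is unaffected.
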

By considering planar digraphs and their directed duals, the dualities between minimal directed cuts and directed cycles as well as of dijoins and feedback arc sets yield:
\begin{corollary} \label{DeVosPlanar}
If $D$ is a planar digraph of digirth $g$, then there are $m$ feedback arc sets $F_1,\ldots,F_m \subseteq E(D)$ equipped with a weighting $x_1,\ldots,x_m \in \mathbb{R}_+$ so that $x_1+\ldots+x_m=g$ and for each edge $e \in E(D)$, $\sum_{i: e \in F_i}{x_i} \leq 1$.
\end{corollary}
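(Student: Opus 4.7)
The plan is to deduce this from the preceding theorem by passing to the planar dual. Given a plane embedding of $D$, I would construct the directed dual $D^*$ in the standard way: for each arc $e$ of $D$, the dual arc $e^*$ crosses $e$ and is oriented so that, say, the face on the right of $e$ is the tail and the face on the left is the head of $e^*$. With this convention the classical correspondence between cuts and cycles becomes directed: the minimal directed cuts of $D^*$ are in bijection (via $e \leftrightarrow e^*$) with the directed cycles of $D$, and dually the minimal directed cuts of $D$ correspond to the directed cycles of $D^*$. Consequently, a set $F \subseteq E(D)$ meets every directed cycle of $D$ if and only if the corresponding set $F^* \subseteq E(D^*)$ meets every directed cut of $D^*$; that is, feedback arc sets of $D$ correspond to dijoins of $D^*$.

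With this setup in place, the digirth $g$ of $D$ equals the minimum size of a directed cut of $D^*$. Hence I can invoke the previous theorem on $D^*$ to obtain dijoins $J_1,\ldots,J_g \subseteq E(D^*)$ together with weights $x_1,\ldots,x_g \in \mathbb{R}_+$ such that $x_1+\cdots+x_g = g$ and $\sum_{i:\,e^* \in J_i} x_i \le 1$ for every arc $e^* \in E(D^*)$.

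To finish, I would simply translate back: define $F_i \subseteq E(D)$ to be the set of arcs $e$ with $e^* \in J_i$. By the correspondence above, each $F_i$ is a feedback arc set of $D$, the weights $x_i$ are unchanged, and the inequality $\sum_{i:\,e \in F_i} x_i \le 1$ holds for every $e \in E(D)$ because it is the translation of the corresponding inequality for $e^*$ in $D^*$. This yields exactly the claimed collection.

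The only delicate point, and in my view the main thing to get right, is the directed version of planar duality: one must verify that the chosen orientation convention on $D^*$ actually sends directed cuts to directed cycles (and vice versa) rather than merely undirected cuts to undirected cycles. This is standard but should be stated carefully, since without it the translation between dijoins and feedback arc sets does not go through. Once the duality is set up correctly, the rest is a routine transfer of the previous theorem.
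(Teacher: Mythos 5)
Your proposal is correct and follows exactly the route the paper takes: pass to the directed planar dual, use the bijection between directed cycles of $D$ and directed cuts of $D^*$ (hence between feedback arc sets of $D$ and dijoins of $D^*$, and between the digirth of $D$ and the minimum dicut size of $D^*$), apply the preceding theorem to $D^*$, and translate back. The paper merely states this duality in one line, while you spell it out, but the argument is the same.
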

From this we may now conclude an upper bound for the fractional dichromatic number which approaches $1$ for planar digraphs of large digirth.
\begin{theorem}
Let $g \ge 6$. Then for every planar digraph $D$ of digirth $g$ we have $\vec{\chi}_f(D) \leq \frac{g}{g-5}$.
\end{theorem}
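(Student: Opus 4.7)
My plan is to use the fractional feedback arc set decomposition of Corollary \ref{DeVosPlanar} together with the $5$-degeneracy of planar graphs to directly construct a feasible primal solution to the LP~\eqref{primal} defining $\vec{\chi}_f(D)$, of total weight at most $g/(g-5)$.

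I would first apply Corollary \ref{DeVosPlanar} to $D$ to obtain feedback arc sets $F_1,\ldots,F_g \subseteq E(D)$ and weights $x_1,\ldots,x_g \in \mathbb{R}_+$ with $\sum_i x_i = g$ and $\ell(e) := \sum_{i:\, e \in F_i} x_i \leq 1$ for every arc $e$. Since $g\geq 6$ excludes digons, the underlying graph $G$ of $D$ is a simple planar graph, hence $5$-degenerate; iteratively removing a vertex of minimum degree then yields an orientation $\vec G$ of $G$ in which every vertex has out-degree at most $5$. For each $i$ I would set
$$B_i := \{u \in V(D) : u \text{ is the } \vec G\text{-tail of some arc in } F_i\}.$$
Because each arc of $F_i$ has its $\vec G$-tail in $B_i$, the set $B_i$ is a vertex cover of $F_i$ viewed as an undirected edge set. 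Hence $D[V(D)\setminus B_i]$ contains no arc of $F_i$ and is a subdigraph of the acyclic digraph $D - F_i$, so $A_i := V(D)\setminus B_i$ lies in $\mathcal{A}(D)$.

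Finally, I would put weight $x_i/(g-5)$ on each $A_i$ and weight $0$ on every other acyclic set. This gives total weight $g/(g-5)$, so the only thing left to check is that each vertex is fractionally covered at least once, equivalently $\sum_{i:\, v \in B_i} x_i \leq 5$ for all $v \in V(D)$. The indicator bound $\mathbf{1}[v \in B_i]\leq\sum_{e:\, v = \vec G\text{-tail}(e)}\mathbf{1}[e \in F_i]$ gives
$$\sum_{i:\, v \in B_i} x_i \;\leq\; \sum_{e:\, v = \vec G\text{-tail}(e)} \ell(e) \;\leq\; 5,$$
since $v$ is the $\vec G$-tail of at most five arcs and $\ell(e)\le 1$. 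This load estimate is really the only non-routine point: it is where the $5$-degeneracy of planar graphs and the per-arc bound $\ell(e)\le 1$ from Lucchesi-Younger interact to convert an edge-based feedback cover into a vertex-based acyclic cover, and the $5$ in the bound $g/(g-5)$ is precisely the $5$ coming from degeneracy. The rest of the argument is bookkeeping with the covering LP.
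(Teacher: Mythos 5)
Your proof is correct and takes essentially the same approach as the paper: same application of Corollary~\ref{DeVosPlanar}, same primal LP instance, and the same per-vertex load estimate via $5$-degeneracy; your out-degree-$\le 5$ orientation $\vec G$ is simply a repackaging of the paper's degeneracy ordering, and your sets $A_i$ coincide with the paper's $X_j$. One minor point: $g\ge 6$ forbids digons but not parallel arcs, so you should first pass to a simple digraph (as the paper does explicitly with ``without loss of generality assume $D$ to be simple''), which is harmless since deleting parallel arcs does not change $\vec{\chi}_f$.
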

\begin{proof}
Without loss of generality assume $D$ to be simple. Let $F_1,\ldots,F_m,x_1,\ldots,x_m$ be as given by Corollary \ref{DeVosPlanar}. Use the 5-degeneracy of the underlying graph $U(D)$ of $D$ to derive an ordering $v_1,\ldots,v_n, n:=|V(D)|$ of the vertices so that for each $i \in \{1,\ldots,n\}$, $v_i$ has degree at most $5$ in $G_i:=U(D)[v_1,\ldots,v_i]$. For each $v_i$, let $c(v_i)$ denote the set of $j \in \{1,...,m\}$ so that $v_i$ has an incident edge in $F_j \cap E(G_i)$. Then clearly $$\sum_{j \in c(v_i)}{x_j} \leq \sum_{e \in E_{G_i}(v)}{\sum_{j: e \in F_j}{x_j}} \leq \text{deg}_{G_i}(v_i) \leq 5$$ for each $v_i$. Furthermore, the vertex set $X_j:=\{x \in V(D)|j \notin c(x)\}$ is acyclic in $D$ for all $j=1,\ldots,m$: In any directed cycle $C$ in $D$ we find an arc contained in $F_j$, and thus, $j$ is contained in at least one of the $c$-sets of its end vertices. \par\noindent We now define an instance of the linear optimization program \ref{primal} defining $\vec{\chi}_f(D)$ according to $x_A=\frac{i_A}{g-5}$, where  $i_A=\sum_{j \in \{1,\ldots,m\}:A=X_j}{x_j}$ for each $A \in \mathcal{A}(D)$. Then those variables are non-negative and for each vertex $v$, we have
$$\sum_{A \in \mathcal{A}(D,v)}{i_A}=\sum_{j \in \{1,\ldots,m\}:v \in X_j}{x_j}=\sum_{j \notin c(v)}{x_j}=\sum_{j=1}^{m}{x_j}-\sum_{j \in c(v)}{x_j} \geq g-5.$$ Hence this is a legal instance proving $\vec{\chi}_f(D) \leq \sum_{A \in \mathcal{A}}{\frac{i_A}{g-5}}=\frac{\sum_{j=1}^{m}{x_j}}{g-5}=\frac{g}{g-5}$.
\end{proof}
\section{Conclusion and Some Open Problems}
The star dichromatic number of a digraph introduced and analysed in
this paper seems to share all desirable attributes of the competing
parameter from \cite{bokal}, the circular chromatic number. But, while
the star dichromatic number is always a lower bound for the circular
dichromatic number, it has the additional advantage that it is immune
to the existence of directed cuts, while the addition of a dominating
source makes the circular dichromatic number hit the ceiling. We
therefore believe that the parameter introduced in the present paper
yields a preferable generalization of the star chromatic
number of Vince to the directed case. This is also supported by the fact that it can be seen as oriented version of the circular vertex arboricity.

In the planar case it might be true that the star chromatic number
approaches 1 when the digirth increases (Conjecture \ref{girth}). Note
that this is impossible for $\vec{\chi}_c(D)$. It might be rewarding
to study in particular the case $g=4$ of Conjecture~\ref{girth}, e.g.,
orientations of planar triangulations without directed triangles,
as recently there has been substantial progress towards digraph
colourings of this class (\cite{mohar4}). 

Also, it would be interesting to determine the computational
complexity of decision problems of the form:
\\
Instance: A digraph $D$ (possibly from a certain class) and a real
number $p>1$. \\
Decide whether $\vec{\chi}^\ast(D) \leq p$.
\\
In \cite{bojannp} it was shown that corresponding decision problem for
the circular dichromatic number is NP-complete, even if restricted to
planar digraphs. We conjecture that the same should be true for the
star dichromatic number. This is true at least for all integers $p \in
\mathbb{N}, p\ge 2$, since in that case $\vec{\chi}^\ast(D) \leq p
\Leftrightarrow \vec{\chi}_c(D) \leq p$ for all digraphs $D$.\\

We want to conclude with an incomplete list of
other natural questions that remain unanswered in this paper:

\begin{itemize}
\item For any $g \ge 4$, is there a planar digraph of digirth $g$ with $\vec{\chi}^\ast(D)=\frac{g-1}{g-2}$?
\item Is there a meaningful characterization of digraphs with
  $\vec{\chi}^\ast(D)=\vec{\chi}_f(D)$? 
\item Which digraphs satisfy $\vec{\chi}^\ast(D)=\vec{\chi}(D)$ or,
  more generally,   $\vec{\chi}^\ast(D)=\vec{\chi}_c(D)$?
\item What about the star dichromatic number of tournaments?
\item Does the following statement hold true: For any $\varepsilon>0$, there is a $\delta=\delta(\varepsilon)>0$ such that any planar digraph $D$ with $\vec{\chi}_f(D) \leq 1+\delta$ has $\vec{\chi}^\ast(D) \leq 1+\varepsilon$ ?
\end{itemize}
\paragraph{Acknowledgements}
Part of this research was carried out during a stay of the second author at the Workshop Cycles and Colourings 2018, High Tatras, Slovakia. The collaborative, inspiring and friendly atmosphere during the workshop is gratefully acknowledged. The second author was supported by DFG-GRK 2434.
\bibliography{literaturmaster} 
\bibliographystyle{alpha}

\end{document}